\documentclass[12pt, two]{article}

\textwidth15cm \textheight22cm
\usepackage{CJK}
\usepackage{amsthm}
\usepackage{amsmath,amssymb,mathrsfs}
\usepackage{amsfonts}
\usepackage{yhmath}
\usepackage{graphics}
\usepackage{amssymb}
\usepackage{epsfig}
\usepackage{latexsym}
\usepackage{indentfirst}
\marginparwidth 100pt \oddsidemargin  0pt \evensidemargin 0pt
\marginparsep 0pt \topmargin -0.8truecm \textheight 23.0truecm
\textwidth 16.0truecm
\usepackage[top=1in, bottom=1in, left=1.25in, right=1.25in]{geometry}
\numberwithin{equation}{section}
\usepackage{CJK}

\newtheorem{theorem}{Theorem}[section]
\newtheorem{lemma}{Lemma}[section]
\newtheorem{proposition}{Proposition}[section]
\newtheorem{corollary}{Corollary}[section]
\newtheorem{remark}{Remark}[section]

\numberwithin{equation}{section} %¹«Ê½Ëæ½Ú±àºÅ

\title{
Liouville theorems for $F$-harmonic maps and their applications
\footnotetext[0]{2010 Mathematics Subject Classification. Primary: 35B53, 58Z05, 53C43. }
\footnotetext[0]{ ${}^*$Supported by NSFC grant No 10971029, and NSFC-NSF grant No 1081112053.}
\footnotetext[0]{{\em Key words and phrases. complete Riemannian manifold, F-harmonic map,
Liouville  theorem.}}
}
\author{
Yuxin Dong${}^*$,
  Hezi Lin and Guilin Yang}
\date{}
\begin{document}
%\begin{CJK*}{GBK}{kai}

\maketitle

\begin{abstract}
We prove several Liouville theorems for $F$-harmonic maps from some complete Riemannian manifolds by assuming some conditions on the Hessian of the distance function,  the degrees of $F(t)$ and the asymptotic behavior of the map at  infinity.  In particular, the results can be applied to $F$-harmonic maps from some  pinched manifolds, and can deduce a Bernstein type result for an entire minimal graph.
\end{abstract}

\section{Introduction}

In [Ar], M. Ara introduced the $F$-harmonic map and its associated stress-energy
tensor. The concept of $F$-harmonic maps unifies the concepts of harmonic
maps, $p$-harmonic maps,  minimal hypersurfaces, maximal spacelike hypersurfaces
and steady compressible flows, etc. It is known that the stress-energy tensor is a
useful tool for studying the energy behavior and vanishing results of related functional (cf. [DW]).

  Liouville type theorems for harmonic maps, $p$-harmonic maps and $F$-harmonic
maps were investigated by several authors (cf. [GRSB], [Ch], [Hi], [SY], [Se], [Ji],
[DW] and the references therein). Up to now, most Liouville results have been
established by assuming either the finiteness of the energy of the map or the smallness
of the whole image of the domain manifold under the map. In [Ji], Z.R. Jin
proved several interesting Liouville theorems for harmonic maps from complete
manifolds, whose assumptions concern the asymptotic behavior of the maps at infinity.
One special case of his results is that if $u: (R^m, g_0)\rightarrow (N^n, h)$ is a harmonic map,
and $u(x)\rightarrow  p_0 \in N^n$ as $|x| \rightarrow \infty$, then $u$ is a constant map.

   In this paper, we generalize Jin's method and results to $F$-harmonic maps.
The procedure consists of two steps. The first step is to use the $F$-stress energy
tensor, by choosing a suitable vector field, to deduce the lower energy rates of
the $F$-harmonic maps. The second step is to use the asymptotic assumption of
the maps at infinity to obtain the upper energy growth rates of the $F$-harmonic maps.
Under suitable conditions on $F$ and the Hessian of the distance functions of the
domain manifolds, one may show that these two growth rates are contradictory
unless the $F$-harmonic maps are constant maps. In this way, we establish some
Liouville results for $F$-harmonic maps with the asymptotic property at infinity from some complete manifolds (cf. $\S5$
for detailed statements).
Finally, in $\S 6$, we show that the asymptotic conditions on $F$-harmonic maps for
 Liouville theorems can be relaxed if the target manifold is more special.
 These Liouville theorems enable us to give an interesting
application for a global minimal graphic hypersurface $(x, u(x))$ in $R^{m+1}$  as follows:
If there exists a constant $c$ and
$\underset{R\rightarrow \infty}{\lim} \underset{|x|=R}{\max}\{\frac{(u-c)^2}{\sqrt{1+|du|^2}}\}=0$, then the graph is a horizontal hyperplane.

This paper is organized as follows. In  $\S 2$, we present some basic notions, some examples of $F$-harmonic maps and a useful integral formula associated with the $F$-stress energy tensor.  In  $\S 3$ and $\S 4$, under suitable conditions on the domain manifolds and the asymptotic condition on the maps, we derive the lower energy growth rates and the upper energy growth rates for $F$-harmonic maps respectively.  In  $\S 5$ and $\S 6$, we establish the main Liouville results and give some applications.

\section{Preliminaries}
Let $F: [0,\alpha)\rightarrow [0,\infty)$ be a $C^2$ function with $F(0)=0$ such that $F^{'}> 0$ on $(0, \alpha)$ for some $\alpha > 0$. For a smooth map $u: (M,g)\rightarrow (N,h)$ between Riemannian manifolds $(M,g)$ and $(N,h)$, we define the $F$-energy $E_F(u)$ by
\begin{equation}
E_F(u)=\int_MF(\frac{|du|^2}{2})dv_g= \int_M F(\frac{1}{2}h_{\alpha\beta}\frac{\partial u^\alpha}{\partial x_i}\frac{\partial u^\beta}{\partial x_j}g^{ij})dv_g.
\end{equation}
The map u is called $F$-harmonic if it is a critical point of the functional $E_F(u)$, that is,
\begin{equation}
\frac{d}{dt}E_F(u_t)=0
\end{equation}
for any compactly supported variation $u_t: M \rightarrow N$ $(-\epsilon < t < \epsilon)$ with $u_0 =u$.
the $F$-tension field $\tau_F(u)$ of $u$ is defined by
\begin{equation}
\tau_F(u)=F'(\frac{|du|^2}{2})\tau(u)+ u_*(grad(F'(\frac{|du|^2}{2}))).
\end{equation}
From [Ar], we know that $u$ is $F$-harmonic if and only if $\tau_F(u)=0$.

\textbf{Example 2.1}.  (cf. [Ar], [DW]) When $F(t)=2t$, $\frac{1}{p}(2t)^{p/2}$, $(1+2t)^\alpha$ ($\alpha>1$, dim$M$=2), and $e^{2t}$, the $F$-harmonic map becomes a harmonic map,
a $p$-harmonic map, an $\alpha$-harmonic map, and an exponentially harmonic map respectively.

\textbf{Example 2.2}. (cf. [Ya], [DW]) Let $M^m=(x, u(x)))\hookrightarrow R^{m+1}$ be a graph defined on $R^m$, where $u:R^m \rightarrow R$ be a smooth function. Then $M$ is a minimal graph if and only if $u: R^m \rightarrow R$ is a $F$-harmonic map with $F(t)=\sqrt{1+2t}-1$.

\textbf{Example 2.3}.  (cf. [Ot1,2]) Set $F(t)=\int_0^{2t} \rho (s)ds$ with $\rho: R^+\cup \{0\} \rightarrow R^+$. Under suitable conditions on $\rho$, the $F$-harmonic maps have a physical analogy as steady compressible flows on a Riemannian manifold.

From now on, we will assume that $F$ is defined on $[0, \infty)$, that is, $\alpha = \infty$.
Similar to [Ka] and [DW], we may define the upper degree $d_F$ and the lower degree $l_F$ of $F$  as follows:

\begin{equation*}
d_F=\underset{t \geq 0}{\sup}\frac{tF'(t)}{F(t)}
\end{equation*}
and
\begin{equation*}
l_F=\underset{t \geq 0}{\inf}\frac{tF'(t)}{F(t)}.
\end{equation*}
For example, if $F(t)=\frac{1}{p}(2t)^{p/2}$, then $d_F=l_F=p/2$; if $F(t)=\sqrt{1+2t}-1$, then $d_F=1$ and $d_l= \frac{1}{2}$.
In general, we have $l_F \leq d_F$.   From now on, we always assume that $d_F<+\infty$ and $m > \max\{2, 2d_F\}$.
The stress-energy tensor associated with the functional $E_F(u)$ is defined by ([Ar])
\begin{equation}
S_F(u)=F(\frac{|du|^2}{2})g- F'(\frac{|du|^2}{2})u^*(h).
\end{equation}
From [Ar], we know that if $u$ is $F$-harmonic, then $div S_F(u)=0$. Recall that for a two tensor field $T \in \Gamma(T^*M\otimes T^*M)$, it is divergence $div T \in \Gamma(T^*M)$ is defined by
\begin{equation*}
(div T)(X)=\underset{i}{\sum}(\nabla_{e_i}T)(e_i, X)
\end{equation*}
where $\{e_i\}$ is an orthonormal basis of $TM$. For a vector field $X$ on $M$, its dual one form $\theta_X$ is given by
\begin{equation*}
\theta_X(Y)=g(X,Y).
\end{equation*}
The covariant derivative of $\theta_X $ gives a 2-tensor field $\nabla\theta_X$:
\begin{equation}
(\nabla\theta_X)(Y,Z)=(\nabla_Z\theta_X)(Y)=g(\nabla_Z X,Y), \qquad \forall \ \ Y,Z\in TM.
\end{equation}
If $X=\nabla\psi$ is the gradient of some  smooth function $\psi$ on $M$, then $\theta_X=d\psi$ and $\nabla\theta_X=Hess(\psi)$.
\begin{lemma}
([Ba],[DW]) Let $T$ be a symmetric (0,2)-type tensor field and let $X$ be a vector field, then
\begin{eqnarray*}
div(i_X T)&=& (div T)(X) + \langle T, \nabla \theta_X \rangle\\
&=& (div T)(X) + \frac{1}{2}\langle T, L_Xg \rangle.
\end{eqnarray*}
\end{lemma}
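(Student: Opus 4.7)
The plan is to compute $\mathrm{div}(i_X T)$ directly at a point using a local orthonormal frame $\{e_i\}$ that is parallel at the point (or, equivalently, using the invariant formula and letting the undifferentiated $\nabla_{e_i} e_i$ terms cancel). Since $i_X T$ is the 1-form defined by $(i_X T)(Y)=T(X,Y)$, its divergence is $\sum_i (\nabla_{e_i}(i_X T))(e_i)$. Expanding via the Leibniz rule for covariant derivatives and using $(\nabla_Z T)(X,Y)=\nabla_Z(T(X,Y))-T(\nabla_Z X,Y)-T(X,\nabla_Z Y)$, I expect to arrive at
\begin{equation*}
\mathrm{div}(i_X T)=\sum_{i}(\nabla_{e_i}T)(X,e_i)+\sum_{i}T(\nabla_{e_i}X,e_i).
\end{equation*}

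The next step is to identify each of these two sums with the terms claimed in the lemma. For the first sum, the symmetry of $T$ gives $(\nabla_{e_i}T)(X,e_i)=(\nabla_{e_i}T)(e_i,X)$, so the first sum is exactly the definition $(\mathrm{div}\,T)(X)$ recalled in the paragraph preceding (2.5). For the second sum, I would write $T(\nabla_{e_i}X,e_i)=\sum_j g(\nabla_{e_i}X,e_j)T(e_j,e_i)$ and then compare with
\begin{equation*}
\langle T,\nabla\theta_X\rangle=\sum_{i,j}T(e_i,e_j)(\nabla\theta_X)(e_i,e_j)=\sum_{i,j}T(e_i,e_j)\,g(\nabla_{e_j}X,e_i),
\end{equation*}
using the definition (2.5). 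Using the symmetry $T(e_i,e_j)=T(e_j,e_i)$ and a relabeling of the summation indices, the two expressions coincide, yielding the first equality of the lemma.

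For the second equality, I would recall the standard identity
\begin{equation*}
(L_X g)(Y,Z)=g(\nabla_Y X,Z)+g(\nabla_Z X,Y)=(\nabla\theta_X)(Z,Y)+(\nabla\theta_X)(Y,Z),
\end{equation*}
so that $L_X g$ is the symmetrization of $\nabla\theta_X$ (multiplied by $2$). Contracting a symmetric $T$ against a 2-tensor is the same as contracting against its symmetric part, hence $\langle T,\nabla\theta_X\rangle=\tfrac{1}{2}\langle T,L_X g\rangle$.

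No step here is a real obstacle; this is a short book-keeping exercise and the main thing to be careful about is the placement of the two arguments of $\nabla\theta_X$ (which is defined with an unusual convention in (2.5): the differentiating direction sits in the \emph{second} slot), so that the index matching in the contraction with $T$ uses the symmetry of $T$ at the right moment. The cleanest presentation is to do the calculation in a frame geodesic at the point, so that all $\nabla_{e_i}e_i$ terms vanish and only the two terms above survive.
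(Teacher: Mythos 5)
Your proposal is correct and follows essentially the same route as the paper: both compute $\mathrm{div}(i_XT)$ in a frame that is parallel at the point, split the result into $(\mathrm{div}\,T)(X)$ plus a term matched to $\langle T,\nabla\theta_X\rangle$ via the convention $(\nabla\theta_X)(Y,Z)=g(\nabla_Z X,Y)$ of (2.5), and then relate $\nabla\theta_X$ to $L_Xg$ using the symmetry of $T$. The only cosmetic difference is that for the second equality the paper expands $(L_Xg)(e_i,e_j)$ through Lie brackets in the normal frame, whereas you quote the covariant identity $(L_Xg)(Y,Z)=g(\nabla_YX,Z)+g(\nabla_ZX,Y)$ and observe that pairing a symmetric $T$ with $\nabla\theta_X$ sees only its symmetric part $\tfrac{1}{2}L_Xg$ --- the same computation in invariant dress.
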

\begin{proof}
Let $\{e_i\}$ be a local orthonormal frame field around a point $p$ such that $(\nabla e_i)_p=0$. Then
\begin{eqnarray}
\nonumber div(i_XT) &=& \underset{i=1}{\overset{m}{\sum}}(\nabla_{e_i}(i_XT))(e_i)\\
\nonumber &=& \underset{i=1}{\overset{m}{\sum}}(\nabla_{e_i}(T(X,e_i))- T(X, \nabla_{e_i}e_i))\\
\nonumber &=& (divT)(X) + \underset{i,j=1}{\overset{m}{\sum}}T(e_i, e_j)\langle \nabla_{e_i}X,e_j \rangle\\
&=& (divT)(X) + \langle T, \nabla \theta_X \rangle.
\end{eqnarray}
We also have
\begin{eqnarray}
\nonumber \langle L_Xg, T \rangle &=& \underset{i,j}{\sum}(L_Xg)(e_i,e_j)T(e_i, e_j)\\
\nonumber &=& \underset{i,j}{\sum}\{Xg(e_i,e_j)-g([X,e_i], e_j)-g(e_i,[X,e_j])\}T(e_i, e_j)\\
\nonumber &=& \underset{i,j}{\sum} 2g(\nabla_{e_i}X,e_j)T(e_i, e_j)\\
&=&  2\langle T, \nabla \theta_X \rangle.
\end{eqnarray}
Therefore (2.6) and (2.7) yield this lemma.
\end{proof}

Let $D$ be any bounded domain of $M$ with $C^1$ boundary.  By applying Lemma 2.1 to $S_F$ and using the divergence theorem, we immediately have the
following integral formula (see [Xi], [DW]):
\begin{equation}
\int_{\partial D}S_F(u)(X,\nu)ds_g=\int_{D}[\langle S_F(u),\frac{1}{2}L_Xg\rangle+(divS_F(u))(X)]dv_g
\end{equation}
where $\nu$ is the unit outward normal vector field along $\partial D$. In particular, if $u$ is a $F$-harmonic map, then by $divS_F(u)=0$,
we have
\begin{equation}
\int_{\partial D}S_{F}(u)(X,\nu)ds_g= \int_{D}\langle S_F(u), \frac{1}{2}L_Xg\rangle dv_g.
\end{equation}

\section{Lower energy growth rates  for $F$-harmonic maps}

Let $(M^m, g_0 )$ be a complete Riemannian manifold with a pole $x_0$. Denote by $r(x)$  the $g_0$-distance function relative to the pole $x_0$,
that is,  $r(x)=dist_{g_0}(x,x_0)$. Set $B(r)=\{x \in M^m: r(x) \leq r\}$. It is known that $\frac{\partial}{\partial r}$ is always an eigenvector of $Hess_{g_0}(r^2)$ associated to eigenvalue 2. Denote by $\lambda_{\max}$ (resp. $\lambda_{\min}$)  the maximum (resp. minimal ) eigenvalues of $Hess_{g_0}(r^2)-2dr\otimes dr$ at each point of $M\setminus\{x_0\}$.

From now on, we consider an $F$-harmonic map $u: (M^m, g)\rightarrow (N,h)$, where $g=f^2g_0$, $0< f \in C^{\infty}(M)$. Clearly the vector field
\begin{equation*}
\nu=f^{-1}\frac{\partial}{\partial r}
\end{equation*}
is an outer unit normal vector field along $\partial B(r)\subset (M, g)$.

Henceforth we will assume that $f$ satisfies either $\frac{\partial{\log f}}{\partial r} \geq 0$ or $\frac{\partial{\log f}}{\partial r} \leq 0$ on
$M\setminus\{x_0\}$. The further conditions for $f$ are as follows:\\
$(f_1)$ if $\frac{\partial{\log f}}{\partial r} \geq 0$ (resp. $ \frac{\partial{\log f}}{\partial r} \leq 0$), there is a constant $\sigma >0$
such that
\begin{equation*}
(m-2d_F)r \frac{\partial{\log f}}{\partial r} +\frac{(m-1)}{2}\lambda_{\min} + 1
- d_F\max\{2,\lambda_{\max}\} \geq \sigma
\end{equation*}
\begin{equation*}
(resp.
(m-2l_F)r \frac{\partial{\log f}}{\partial r} +\frac{(m-1)}{2}\lambda_{\min} + 1
- d_F\max\{2,\lambda_{\max}\}\geq \sigma).
\end{equation*}
$(f_2)$  there are constants $C>0$, $R_0 >0$   such that
\begin{equation*}
(\int_R^{\infty}\frac{dr}{\underset{\partial B(r)}{\int}f^{m-2}(x)ds_{g_0}})^{-1} \leq CR^{\sigma} \ \ for \ \ R>R_0,
\end{equation*}
where $\sigma$ is the constant in $(f_1)$.\\
$(f_3)$ there are constants $C>0$, $R_0>0$, such that
\begin{equation*}
 \underset{\partial B(R)}{\int}f^{m-2}(x)  ds_{g_0} \leq CR\log R \ \ for \ \ R>R_0.
 \end{equation*}

Now we take $X= r \frac{\partial}{\partial r} = \frac{1}{2}\nabla^0 r^2$ in (2.9), where $\nabla^0$ denotes the covariant derivative
determined by the metric $g_0$. By a direct computation, we have
\begin{eqnarray}
\nonumber \frac{1}{2}L_Xg &=& \frac{1}{2}L_{r \frac{\partial}{\partial r}}f^2g_0\\
\nonumber &=& rf\frac{\partial f}{\partial r}g_0 + f^2 \{\frac{1}{2}L_Xg_0\}\\
&=& r \frac{\partial{\log f}}{\partial r}g +  \frac{1}{2}f^2L_Xg_0,
\end{eqnarray}
and thus
\begin{eqnarray}
\nonumber g(S_F(u),\frac{1}{2}L_Xg ) &=& g(S_F(u), r \frac{\partial{\log f}}{\partial r}g +  \frac{1}{2}f^2L_Xg_0)\\
&=& r \frac{\partial{\log f}}{\partial r}g(S_F(u),g) + \frac{1}{2}f^2g(S_F(u), Hess_{g_0}(r^2)).
\end{eqnarray}
Using (2.4), we have
\begin{eqnarray}
(m-2l_F)F(\frac{|du|^2}{2}) \geq g(S_F(u),g) \geq  (m-2d_F)F(\frac{|du|^2}{2}).
\end{eqnarray}
Let $\{e_i\}_{i=1}^m$ be an orthonormal basis with respect to $g_0$ and $e_m=\frac{\partial}{\partial r}$. We may assume that $Hess_{g_0}(r^2)$ becomes a diagonal matrix w.r.t.$\{e_i\}$. Then $\{\tilde{e}_i= f^{-1}e_i\}_{i=1}^m$
is an orthonormal basis with respect to g.

\begin{eqnarray}
\nonumber f^2g(S_F(u), Hess_{g_0}(r^2)) &=& f^2\underset{i,j=1}{\overset{m}{\sum}}S_F(u)(\tilde{e}_i,\tilde{e}_j) Hess_{g_0}(r^2)(\tilde{e}_i,\tilde{e}_j)\\
\nonumber &=& f^2 \{ \underset{i=1}{\overset{m}{\sum}}F(\frac{|du|^2}{2}) Hess_{g_0}(r^2)(\tilde{e}_i,\tilde{e}_i)\\
\nonumber &&- \underset{i,j=1}{\overset{m}{\sum}}F'(\frac{|du|^2}{2})\langle du(\tilde{e}_i),du(\tilde{e}_j)\rangle Hess_{g_0}(r^2)(\tilde{e}_i,\tilde{e}_j)\}\\
&=& F(\frac{|du|^2}{2}) \triangle_{g_0}r^2 \\
\nonumber &&- F'(\frac{|du|^2}{2})\underset{i=1}{\overset{m}{\sum}}\langle du(\tilde{e}_i),du(\tilde{e}_i)\rangle Hess_{g_0}(r^2)(e_i, e_i)
\end{eqnarray}
Using the definition of  the upper degree, we have
\begin{eqnarray}
\nonumber f^2g(S_F(u), Hess_{g_0}(r^2)) &\geq& [(m-1)\lambda_{\min} + 2]F(\frac{|du|^2}{2})\\
\nonumber && -\max\{2,\lambda_{\max}\}F'(\frac{|du|^2}{2})\underset{i=1}{\overset{m}{\sum}}\langle du(\tilde{e}_i),du(\tilde{e}_i)\rangle\\
&\geq& [(m-1)\lambda_{\min} + 2- 2d_F\max\{2,\lambda_{\max}\} ]F(\frac{|du|^2}{2}).
\end{eqnarray}

In  following we only consider the case $\frac{\partial{\log f}}{\partial r} \geq 0$, because the argument for the other
case $\frac{\partial{\log f}}{\partial r} \leq 0$ is similar.

Combining (3.2), (3.3) and (3.5), we have
\begin{eqnarray}
\nonumber  g(S_F(u),\frac{1}{2}L_Xg ) &\geq&  [(m-2d_F)r \frac{\partial{\log f}}{\partial r} +\frac{(m-1)}{2}\lambda_{\min}
+ 1\\
&& - d_F\max\{2,\lambda_{\max}\}]F(\frac{|du|^2}{2}).
\end{eqnarray}
By the coarea formula and $|\nabla r|= f^{-1}$, we deduce that
\begin{eqnarray}
\nonumber \int_{\partial B(r)}S_F(u)(X, \nu)ds_g &\leq& \int_{\partial B(r)}F(\frac{|du|^2}{2})g(r \frac{\partial}{\partial r}, \nu)ds_g\\
\nonumber &=& r\int_{\partial B(r)}F(\frac{|du|^2}{2})f ds_g\\
\nonumber &=& r\frac{d}{dr}\int_0^r\{\frac{\int_{\partial B(t)}F(\frac{|du|^2}{2})ds_g}{|\nabla r|}\}dt\\
&=& r\frac{d}{dr}\int_{B(r)}F(\frac{|du|^2}{2})dv_g.
\end{eqnarray}
Hence, by (2.9), (3.6) and (3.7), we have
\begin{eqnarray}
\nonumber  r\frac{d}{dr}\int_{B(r)}F(\frac{|du|^2}{2})dv_g &\geq&  \int_{B(r)}[(m-2d_F)r \frac{\partial{\log f}}{\partial r} +\frac{(m-1)}{2}\lambda_{\min}+ 1\\
&& - d_F\max\{2,\lambda_{\max}\}]F(\frac{|du|^2}{2})dv_g.
\end{eqnarray}
By $(f_1)$,  there is  a constants $\sigma >0$  such that
\begin{equation}
(m-2d_F)r \frac{\partial{\log f}}{\partial r} +\frac{(m-1)}{2}\lambda_{\min} + 1
- d_F\max\{2,\lambda_{\max}\} \geq \sigma,\\
\end{equation}
thus
\begin{equation}
r\frac{d}{dr}\int_{B(r)}F(\frac{|du|^2}{2})dv_g \geq \sigma \int_{B(r)}F(\frac{|du|^2}{2})dv_g
\end{equation}
i.e.,
\begin{equation}
\frac{d}{dr}\frac{\int_{B(r)}F(\frac{|du|^2}{2})dv_g}{r^\sigma} \geq 0.
\end{equation}
Therefore
\begin{equation}
\frac{\int_{B(\rho_1)}F(\frac{|du|^2}{2})dv_g}{\rho_1^\sigma} \leq \frac{\int_{B(\rho_2)}F(\frac{|du|^2}{2})dv_g}{\rho_2^\sigma}
\end{equation}
for $0< \rho_1 \leq \rho_2$.

From the above discussion, we can get the following theorem.
\begin{proposition}
Let $u: (M^m,f^2g_0)\rightarrow (N,h)$ be a $C^2$ $F$-harmonic map. Suppose $f$ satisfies $(f_1)$. If $u$ is not a constant map,
then we have the following estimate
\begin{equation*}
\int_{B(R)}F(\frac{|du|^2}{2})dv_g \geq c(u)R^{\sigma} \ \ as \ R\rightarrow \infty
\end{equation*}
where $c(u)>0$ is a constant only depending on $u$.
\end{proposition}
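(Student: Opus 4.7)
The plan is to apply the integral formula (2.9) with the radial vector field $X = r\,\partial/\partial r = \tfrac12 \nabla^0 r^2$, and then translate the resulting identity into a first-order differential inequality for the cumulative energy $E(R) := \int_{B(R)} F(|du|^2/2)\, dv_g$.

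First I would compute $\tfrac12 L_X g$ explicitly using the conformal factor: since $g = f^2 g_0$, one gets $\tfrac12 L_X g = r(\partial \log f/\partial r)\, g + \tfrac12 f^2 L_X g_0$, with $L_X g_0 = \mathrm{Hess}_{g_0}(r^2)$. Pairing with $S_F(u)$ then produces two terms. For the $g$-term, the two-sided bound $(m - 2d_F) F(|du|^2/2) \leq \langle S_F(u), g\rangle \leq (m - 2l_F) F(|du|^2/2)$ coming from (2.4) applies, and the choice between $d_F$ and $l_F$ is dictated by the sign of $\partial \log f/\partial r$ --- exactly the dichotomy built into $(f_1)$. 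For the Hessian term, I would diagonalize $\mathrm{Hess}_{g_0}(r^2)$ in a $g_0$-orthonormal frame $\{e_i\}$ with $e_m = \partial/\partial r$, so that the $e_m$-eigenvalue is $2$ while the transverse eigenvalues lie in $[\lambda_{\min},\lambda_{\max}]$; then the $F$-contribution is bounded below by $[(m-1)\lambda_{\min} + 2]\,F(|du|^2/2)$, while the $F'\, u^*h$ contribution is bounded above, via the definition of $d_F$, by $2 d_F \max\{2, \lambda_{\max}\}\, F(|du|^2/2)$. Combining these estimates yields the pointwise lower bound for $\langle S_F(u),\tfrac12 L_X g\rangle$.

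On the boundary side, the Cauchy--Schwarz-type inequality $S_F(u)(X,\nu) \leq F(|du|^2/2)\, g(X,\nu) = r f\, F(|du|^2/2)$, combined with the coarea formula and $|\nabla r| = f^{-1}$, rewrites the boundary integral as $r \tfrac{d}{dr} E(R)$. Substituting both sides into (2.9) and invoking $(f_1)$ gives the differential inequality $R\, E'(R) \geq \sigma\, E(R)$, i.e.\ $\tfrac{d}{dR}\bigl(E(R)/R^\sigma\bigr) \geq 0$. Since $u$ is non-constant, there exists some $\rho_0 > 0$ with $E(\rho_0) > 0$, and the monotonicity then yields $E(R) \geq (E(\rho_0)/\rho_0^\sigma)\, R^\sigma$ for all $R \geq \rho_0$, which is precisely the claim with $c(u) = E(\rho_0)/\rho_0^\sigma$.

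The only real subtlety, rather than a serious obstacle, is sign management in the first step: when $\partial \log f/\partial r \leq 0$ one multiplies an inequality by a non-positive quantity and must therefore replace the $d_F$-bound by the $l_F$-bound on $\langle S_F(u), g\rangle$, which is exactly why $(f_1)$ carries two cases with different degree constants. All other manipulations are routine applications of (2.4), the eigenvalue bounds on $\mathrm{Hess}_{g_0}(r^2)$, the coarea formula, and integration of a separable ODE.
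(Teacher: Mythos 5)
Your proposal is correct and follows essentially the same route as the paper: the choice $X=r\,\partial/\partial r$ in (2.9), the decomposition $\tfrac12 L_Xg = r\frac{\partial\log f}{\partial r}g+\tfrac12 f^2\,\mathrm{Hess}_{g_0}(r^2)$ with the trace bound from (2.4) and the eigenvalue/degree bound $[(m-1)\lambda_{\min}+2-2d_F\max\{2,\lambda_{\max}\}]F$, the coarea identification of the boundary term with $r\frac{d}{dr}E(r)$, and integration of $RE'(R)\geq\sigma E(R)$ to get monotonicity of $E(R)/R^{\sigma}$. The only cosmetic difference is that you label $S_F(u)(X,\nu)\leq F\,g(X,\nu)$ a Cauchy--Schwarz-type step, whereas it simply follows from $X$ and $\nu$ being parallel so that $F'\,u^*h(X,\nu)\geq 0$, exactly as implicitly used in the paper's (3.7).
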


Next, we show that if the $F$-harmonic has the unique continuation property(e.g., $F(t)=t$, $\sqrt{1+2t}-1$ in examples 2.2, etc.), the condition $(f_1)$
in Proposition 3.1 may be replaced by:

$(\widetilde{f}_1)$ The left hand sides of the inequalities in $(f_1)$ are nonnegative on the whole $M^m$ and there  exists an $R_0>0$ such that $(f_1)$ holds for $r(x)\geq R_0$.

Assuming $(\widetilde{f}_1)$, taking $X=r \frac{\partial}{\partial r}$ and applying Lemma 2.1 to $div(i_XS_F(u))$ on $B(R) \setminus B(R_0)$,
we get
\begin{eqnarray}
\nonumber && \int_{\partial (R)}S_F(u)(X,\nu)ds_g-\int_{\partial (R_0)}S_F(u)(X,\nu)ds_g\\
\nonumber &&= \int_{B(R) \setminus B(R_0)} g(S_F(u),\frac{1}{2}L_Xg )dv_g\\
 \nonumber  &&\geq  \int_{B(R) \setminus B(R_0)}[(m-2d_F)r \frac{\partial{\log f}}{\partial r} +\frac{(m-1)}{2}\lambda_{\min}
 + 1\\
  && \ \ \ \ - d_F\max\{2,\lambda_{\max}\}]F(\frac{|du|^2}{2})dv_g.
\end{eqnarray}
Set $H(R_0)=\int_{\partial (R_0)}S_F(u)(X,\nu)ds_g $, then by (3.7) and (3.13), we have
\begin{equation*}
R\frac{d}{dR}\int_{ B(R)}F(\frac{|du|^2}{2})ds_g -H(R_0) \geq \sigma \int_{B(R) \setminus B(R_0)}F(\frac{|du|^2}{2})dv_g,
\end{equation*}
and thus
\begin{equation}
R\frac{d}{dR}\{\int_{ B(R)\setminus B(R_0)}F(\frac{|du|^2}{2})dv_g +\frac{H(R_0)}{\sigma}\} \geq
\sigma \{\int_{ B(R)\setminus B(R_0)}F(\frac{|du|^2}{2})dv_g +\frac{H(R_0)}{\sigma}\}.
\end{equation}

To get the lower estimate of $F$-energy, we need the following lemma.
\begin{lemma}
Let $u$: $(M^m, f^2g_0)\rightarrow (N^n, h)$  be a  $C^2$ $F$-harmonic map with unique continuation property. Suppose $f$ satisfies $(\widetilde{f}_1)$. If $u$ is not a constant map, then the $F$-energy $E_F(u)$ must be infinite.
\end{lemma}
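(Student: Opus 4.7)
The plan is to argue by contradiction: suppose $u$ is non-constant but $E_F(u)<\infty$, and derive a contradiction from (3.14). Set $G(R):=\int_{B(R)\setminus B(R_0)} F(|du|^2/2)\,dv_g$ and $A(R):=G(R)+H(R_0)/\sigma$. Then (3.14) rewrites as $\frac{d}{dR}\bigl(R^{-\sigma}A(R)\bigr)\ge 0$, so $R^{-\sigma}A(R)$ is non-decreasing on $(R_0,\infty)$. I would split the argument according to whether $A(R)$ is ever strictly positive.

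If $A(R_1)>0$ for some $R_1>R_0$, the monotonicity immediately gives $A(R)\ge A(R_1)(R/R_1)^{\sigma}\to\infty$ as $R\to\infty$. Since $H(R_0)$ is a fixed finite number, this forces $G(R)\to\infty$ and hence $E_F(u)=+\infty$, contradicting the standing assumption.

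Otherwise $A(R)\le 0$ for every $R>R_0$, i.e., $G(R)\le -H(R_0)/\sigma$. Here I would first observe that $H(R_0)\ge 0$: applying Lemma 2.1 to $i_X S_F(u)$ on the ball $B(R_0)$ and using $\operatorname{div} S_F(u)=0$ yields
\begin{equation*}
H(R_0)=\int_{\partial B(R_0)} S_F(u)(X,\nu)\,ds_g=\int_{B(R_0)} g\!\left(S_F(u),\tfrac{1}{2}L_X g\right) dv_g,
\end{equation*}
and the pointwise estimate that produced (3.6) bounds the integrand on the right below by the same bracket appearing in $(f_1)$ times $F(|du|^2/2)$, which is non-negative on $B(R_0)$ by the first clause of $(\widetilde f_1)$. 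Consequently $0\le G(R)\le -H(R_0)/\sigma\le 0$, so $G(R)\equiv 0$ and $|du|\equiv 0$ on $M\setminus\overline{B(R_0)}$. The unique continuation property then forces $u$ to be constant on all of $M$, contradicting non-constancy.

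The main subtlety is the sign control of the boundary flux $H(R_0)$ in the second case: the non-negativity clause of $(\widetilde f_1)$ on the inner region $B(R_0)$, strictly weaker than the $\sigma$-gap imposed on the exterior, is exactly what is needed to guarantee $H(R_0)\ge 0$ and thereby convert the monotonicity of $R^{-\sigma}A(R)$ into a vanishing statement for $du$ outside $B(R_0)$. Everything else is a Gronwall-type application of (3.14) together with a standard invocation of unique continuation to upgrade exterior vanishing to global constancy.
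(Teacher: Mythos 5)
Your proof is correct, but it runs along a different track from the paper's. The paper argues directly from the finiteness assumption via the coarea formula: writing $E_F(u)=\int_0^{+\infty}\frac{dr}{r}\,\bigl[r\int_{\partial B(r)}F(\frac{|du|^2}{2})f\,ds_g\bigr]$ and using the divergence of $\int^\infty \frac{dr}{r}$, it extracts a sequence $r_i\to\infty$ along which the weighted flux $r_i\int_{\partial B(r_i)}F f\,ds_g\to 0$; since this flux is exactly the left-hand side $r\frac{d}{dr}\int_{B(r)}F\,dv_g$ of the differential inequality $r\frac{d}{dr}\int_{B(r)}F\,dv_g\geq\sigma\int_{B(r)\setminus B(R_0)}F\,dv_g$ (from (3.8) and $(\widetilde f_1)$), letting $r=r_i\to\infty$ forces $\int_{M\setminus B(R_0)}F\,dv_g=0$, and unique continuation finishes. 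You instead run a Gronwall dichotomy on (3.14): if $A(R)=G(R)+H(R_0)/\sigma$ is ever positive, monotonicity of $R^{-\sigma}A(R)$ gives $R^\sigma$ growth contradicting finite energy; otherwise you need the additional observation $H(R_0)\geq 0$, which you correctly obtain from the divergence identity (2.9) on $B(R_0)$ together with the pointwise bound (3.6) and the nonnegativity clause of $(\widetilde f_1)$ (note this is legitimate: $X=\frac{1}{2}\nabla^0 r^2$ is smooth at the pole, and (3.6) holds pointwise off the pole under the standing convention $\frac{\partial\log f}{\partial r}\geq 0$). The paper uses that same nonnegativity clause only to discard the inner integral over $B(R_0)$ in (3.8) and never needs any sign information on the flux $H(R_0)$, so its argument is a bit leaner; your version, on the other hand, avoids the sequence-extraction step entirely and has the pleasant feature that your first case already produces the lower bound $A(R)\geq A(R_1)(R/R_1)^\sigma$, i.e., it simultaneously re-derives the growth estimate of Proposition 3.2, so your dichotomy packages Lemma 3.1 and Proposition 3.2 into a single argument.
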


\begin{proof}
Using coarea formula, we have
\begin{eqnarray}
\nonumber \int_{M}F(\frac{|du|^2}{2})dv_g &=& \int_0^{+\infty}\frac{dr}{r}r\int_{\partial B(r)}F(\frac{|du|^2}{2})\frac{1}{|\nabla r|}ds_g\\
 &=& \int_0^{+\infty}\frac{dr}{r}r\int_{\partial B(r)}F(\frac{|du|^2}{2})fds_g
 \end{eqnarray}
 where $B(r)$ is the geodesic ball centered at $x_0$ with radius $r$.

 If $u$ has finite $F$-energy, i.e.  $\int_{M}F(\frac{|du|^2}{2})dv_g < +\infty$, then by $(3.15)$, we have a sequence $\{r_i\}$ such that
 \begin{equation}
 \underset{r_i \rightarrow +\infty}{\lim}r_i\int_{\partial B(r_i)}F(\frac{|du|^2}{2})fds_g=0.
 \end{equation}
 By $(\widetilde{f}_1)$, the inequality  $(3.9)$ holds for $r(x)\geq R_0$. Using (3.8), we have
 \begin{eqnarray}
\nonumber  r\frac{d}{dr}\int_{B(r)}F(\frac{|du|^2}{2})dv_g &\geq&  \sigma \int_{B(r)\setminus B(R_0)}F(\frac{|du|^2}{2})dv_g.
\end{eqnarray}
Let $r=r_i$ tend to infinity in the above inequality, using (3.16), we have $|du|^2=0$ on $M^m \setminus B(R_0)$, that is, $u$ is  constant  on $M^m \setminus B(R_0)$. By the unique continuation property, we deduce that $u$ is constant on $M^m$. This contradiction shows that the $F$-energy $E_F(u)$ must be infinite.
\end{proof}

By the above Lemma 3.1, we have
\begin{equation*}
\underset{R \rightarrow \infty}{\lim}\int_{ B(R)\setminus B(R_0)}F(\frac{|du|^2}{2})dv_g= +\infty,
\end{equation*}
and thus
\begin{equation*}
\int_{ B(R)\setminus B(R_0)}F(\frac{|du|^2}{2})dv_g +\frac{H(R_0)}{\sigma} >0
\end{equation*}
for sufficiently large $R$. Using (3.14), it follows that
\begin{equation*}
\frac{d}{dR}\{\frac{\int_{ B(R)\setminus B(R_0)}F(\frac{|du|^2}{2})dv_g +\frac{H(R_0)}{\sigma}}{R^{\sigma}}\}  >0
\end{equation*}
which implies that
\begin{equation*}
\int_{B(R)}F(\frac{|du|^2}{2})dv_g + \frac{H(R_0)}{\sigma}\geq c(u)R^{\sigma}
\end{equation*}
for sufficiently large $R$, where $c(u)>0$ is a constant only depending on $u$. Therefore we have the following proposition:
\begin{proposition}
Let $u: (M^m,f^2g_0)\rightarrow (N,h)$  be a $C^2$ $F$-harmonic map with unique continuation property. Suppose $f$ satisfies $(\widetilde{f}_1)$. If $u$ is not a constant map,
then we have the following estimate
\begin{equation*}
\int_{B(R)}F(\frac{|du|^2}{2})dv_g \geq c(u)R^{\sigma} \ \ as \ R\rightarrow \infty
\end{equation*}
where $c(u)>0$ is a constant only depending on $u$.
\end{proposition}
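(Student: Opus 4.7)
The plan is to reuse the machinery of Proposition 3.1, but to carry out all integrations on the annulus $B(R)\setminus B(R_0)$ rather than on the whole ball, since under $(\widetilde{f}_1)$ the pointwise lower bound on the Lie-derivative term $\langle S_F(u), \tfrac{1}{2}L_X g\rangle$ (equation (3.6)) is only guaranteed on the complement of $B(R_0)$. Concretely, first I would take $X = r\,\partial/\partial r$ and apply Lemma 2.1 to $\mathrm{div}(i_X S_F(u))$ over the annulus $B(R)\setminus B(R_0)$. Combined with the coarea computation (3.7) for the outer boundary integral and the pointwise inequality (3.6) on the annulus, this produces exactly the differential inequality (3.14), where the inner boundary integral $H(R_0) = \int_{\partial B(R_0)} S_F(u)(X,\nu)\,ds_g$ appears as an additive constant on both sides.

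The next step, which is the principal obstacle, is to conclude that the quantity
\[ \Phi(R) := \int_{B(R)\setminus B(R_0)} F\left(\tfrac{|du|^2}{2}\right) dv_g + \frac{H(R_0)}{\sigma} \]
is positive for all sufficiently large $R$; only then does (3.14) rewrite as $\frac{d}{dR}\left[\Phi(R)/R^{\sigma}\right] \geq 0$, yielding the needed monotonicity. The sign of $H(R_0)$ is not under our control (it is a fixed real number), so I cannot make this positivity claim from (3.14) alone. This is exactly where the unique continuation hypothesis enters: by Lemma 3.1, a non-constant $F$-harmonic map with unique continuation must have infinite $F$-energy, so $\int_{B(R)\setminus B(R_0)} F\,dv_g \to +\infty$ as $R\to\infty$. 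In particular this integral eventually exceeds $-H(R_0)/\sigma$, which forces $\Phi(R)>0$ for $R$ larger than some threshold $R_1$.

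Once positivity of $\Phi$ is secured past $R_1$, integrating the inequality $\frac{d}{dR}\log\Phi(R) \geq \sigma/R$ yields $\Phi(R) \geq \bigl[\Phi(R_1)/R_1^{\sigma}\bigr]R^{\sigma}$ for $R\geq R_1$. Passing from $\Phi(R)$ to $\int_{B(R)} F(|du|^2/2)\,dv_g$ costs only the additive constant $H(R_0)/\sigma$ together with the (nonnegative) contribution from $B(R_0)$; after shrinking the leading coefficient slightly and restricting to $R$ large, both are absorbed into the desired bound $c(u)R^{\sigma}$. The genuinely new ingredients compared to Proposition 3.1 are therefore only two: the annular integration needed to accommodate the weaker hypothesis $(\widetilde{f}_1)$, and the invocation of Lemma 3.1 to rule out a cancellation between $\Phi(R)$ and the fixed boundary term; all remaining calculations, including the coarea identity for $\int_{\partial B(R)} S_F(u)(X,\nu)\,ds_g$, carry over verbatim from Section 3.
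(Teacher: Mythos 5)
Your proposal is correct and follows essentially the same route as the paper: the paper likewise applies Lemma 2.1 to $\mathrm{div}(i_X S_F(u))$ on the annulus $B(R)\setminus B(R_0)$ to obtain (3.13) and (3.14), invokes Lemma 3.1 (infinite $F$-energy for non-constant maps with unique continuation) to guarantee $\int_{B(R)\setminus B(R_0)}F\bigl(\tfrac{|du|^2}{2}\bigr)dv_g+\tfrac{H(R_0)}{\sigma}>0$ for large $R$, and then integrates the monotonicity of this quantity divided by $R^{\sigma}$. Your closing step absorbing $H(R_0)/\sigma$ and the $B(R_0)$-contribution into the constant $c(u)$ is, if anything, spelled out more carefully than in the paper, which leaves the additive term $H(R_0)/\sigma$ on the left-hand side of its final inequality.
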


\section{Upper energy growth rates  for $F$-harmonic maps}

In order to get the Liouville type  property of $F$-harmonic maps, we need to estimate the upper $F$-energy of the  $F$-harmonic maps.

Set $$E_F^R(u)=\underset{B(R)}{\int}F(\frac{|du|^2}{2})dv_g.$$
Using a method similar to [Ji], we can derive the following theorem of an upper bound for the growth rate of $E_F^R(u)$ as $R\rightarrow \infty$.
\begin{proposition}
Let $u$: $(M^m, f^2g_0)\rightarrow (N^n, h)$ be a  $C^2$ $F$-harmonic map. Suppose that $f$ satisfies $(f_1)$ and $(f_2)$, and  the $F$-lower
degree $l_F>0$ and $F'(\frac{|du|^2}{2})< +\infty$. If $u(x)\rightarrow p_o \in N^n$ as $r(x)\rightarrow \infty$, then $u$ must be a constant map,
or there exists constants $R_0$, $c(u)$, and $\eta(R)\rightarrow 0$ as $R\rightarrow \infty$, such that
\begin{equation*}
E_F^R(u) \leq C(\frac{\eta(R)}{2l_F} + \frac{c(u)}{R^\sigma})R^\sigma \ \ \ for \ \ R \geq R_0.
\end{equation*}
 \end{proposition}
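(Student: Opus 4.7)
My plan is to adapt Jin's argument (as announced in the introduction): test the $F$-harmonic equation, in a normal coordinate chart at $p_0$, against $v^\alpha\phi^2$ with $v^\alpha=u^\alpha-p_0^\alpha$ and $\phi$ a radial cutoff whose Dirichlet energy realises the capacity $\int_R^{\infty}dr/A(r)$ that $(f_2)$ controls. The asymptotic condition $u(x)\to p_0$ then feeds the pointwise smallness of $v$ into an upper $F$-energy bound, and $l_F>0$ converts a bound on $\int F'|du|^2\phi^2\,dv_g$ into a bound on $\int F\phi^2\,dv_g$.

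Fix $R_0$ large enough that $u(M\setminus B(R_0))$ lies inside a normal chart at $p_0$, set $\eta(R)=\sup_{r(x)\ge R}|v(x)|^2$ (so $\eta(R)\to 0$), and record $h_{\alpha\beta}(u)=\delta_{\alpha\beta}+O(|v|^2)$ and $\Gamma^\alpha_{\beta\gamma}(u)=O(|v|)$ in this chart. The equation $\tau_F(u)=0$ becomes, componentwise,
\[
\mathrm{div}_g\bigl(F'(\tfrac{|du|^2}{2})\nabla u^\alpha\bigr)=-F'(\tfrac{|du|^2}{2})\,\Gamma^\alpha_{\beta\gamma}(u)\,g(\nabla u^\beta,\nabla u^\gamma).
\]
Writing $A(r)=\int_{\partial B(r)}f^{m-2}\,ds_{g_0}$, choose a radial $\phi$ that vanishes on $[0,R_0]$, rises smoothly to $1$ on $[R_0,R_0+1]$, stays $1$ on $[R_0+1,R]$, and on $[R,\infty)$ equals the capacity minimiser
\[
\phi(r)=\frac{\int_r^{\infty}dt/A(t)}{\int_R^{\infty}dt/A(t)}.
\]
Using $|\nabla r|_g=1/f$ and $dv_g=f^m\,ds_{g_0}\,dr$, the coarea formula gives $\int_M|\nabla\phi|^2\,dv_g\le C_0+\bigl(\int_R^{\infty}dt/A(t)\bigr)^{-1}\le CR^{\sigma}$ by $(f_2)$.

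Multiplying the componentwise equation by $v^\alpha\phi^2$ and integrating by parts (noting $\nabla v^\alpha=\nabla u^\alpha$) produces
\[
\int_M F'|du|_\delta^2\phi^2\,dv_g=-2\int_M F'v^\alpha\phi\,g(\nabla\phi,\nabla u^\alpha)\,dv_g-\int_M F'\Gamma^\alpha_{\beta\gamma}v^\alpha\,g(\nabla u^\beta,\nabla u^\gamma)\phi^2\,dv_g,
\]
where $|du|_\delta^2=\sum_\alpha|\nabla u^\alpha|^2$. Since $|\Gamma(u)v|=O(|v|^2)$ and $|du|_\delta^2=|du|^2+O(|v|^2|du|^2)$, for $R_0$ large both the $\Gamma$ term and the $h$-versus-$\delta$ discrepancy absorb into the left, leaving $\int F'|du|^2\phi^2\,dv_g\le C\int F'|v||du||\nabla\phi|\phi\,dv_g$. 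Cauchy--Schwarz with a small parameter and one more absorption yield $\int F'|du|^2\phi^2\,dv_g\le C\int F'|v|^2|\nabla\phi|^2\,dv_g$; the $[R_0,R_0+1]$ piece of the right side is a constant, while the $[R,\infty)$ piece is at most $C\eta(R)R^{\sigma}$ by the capacity estimate above.

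Finally, $l_F>0$ gives $F'(\tfrac{|du|^2}{2})|du|^2\ge 2l_F F(\tfrac{|du|^2}{2})$ and $F'(|du|^2/2)<+\infty$ supplies $F'\le C$, so $2l_F\int_M F(\tfrac{|du|^2}{2})\phi^2\,dv_g\le C\eta(R)R^{\sigma}+C'$. Since $\phi\equiv 1$ on $B(R)\setminus B(R_0+1)$ and $E_F^{R_0+1}(u)$ is a constant $c(u)$, rearranging produces the claimed $E_F^R(u)\le c(u)+C\eta(R)R^{\sigma}/(2l_F)$; the constant-map alternative covers the degenerate case in which $u\equiv p_0$ on $M\setminus B(R_0)$ (where unique continuation or the argument of Lemma 3.1 makes $u$ globally constant). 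I expect the main obstacle to be matching the cutoff to $(f_2)$ so $\int|\nabla\phi|^2\,dv_g$ grows exactly like $R^{\sigma}$ while simultaneously absorbing the target-curvature correction $F'\Gamma(u)v\langle du,du\rangle\phi^2$ uniformly in $R$ --- this is precisely where the smallness of $|v|^2$ past $R_0$ and the $F$-structure (via $l_F>0$) must cooperate.
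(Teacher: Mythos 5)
Your proposal is correct in substance but reaches the estimate by a genuinely different mechanism than the paper. The paper also tests the first-variation identity against $w=\phi(r)u$ (its (4.2)--(4.3)), but instead of a Caccioppoli absorption with a fixed capacity cutoff it lets the cutoff degenerate to an exact boundary identity (its (4.4)), packages the interior integral into $Z(R)$ so that $Z'(R)$ is the corresponding boundary integral, and applies Cauchy--Schwarz on $\partial B(R)$ to obtain the differential inequality $Z(R)^2\le C\,Z'(R)\,M(R)$ with $M(R)=\int_{\partial B(R)}F'h_{\alpha\beta}u^\alpha u^\beta f^{m-2}\,ds_{g_0}$; integrating $(-1/Z)'\ge C/M$ then yields $Z(R)\le C\bigl(\int_R^{\infty}dr/M(r)\bigr)^{-1}\le C\eta(R)R^{\sigma}$ by $(f_2)$. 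Your capacity minimiser computes exactly the same reciprocal integral directly, since $\int_R^{\infty}\phi'^2A\,dr=\bigl(\int_R^{\infty}dt/A(t)\bigr)^{-1}$, so the two routes land on the same quantitative bound. Two structural differences are worth noting. First, the paper's ODE route must divide by $Z(R)^2$, hence needs $Z(R)>0$ for large $R$; this is supplied by nonconstancy together with Proposition 3.1, which is where $(f_1)$ enters the proof and why the statement is phrased as a dichotomy. Your route needs neither positivity nor, in fact, $(f_1)$ for the upper bound, so your estimate holds unconditionally and your appeal to unique continuation in the ``degenerate case'' is both unnecessary and unjustified (the unique continuation property is not among the hypotheses); simply drop it. Second, the paper sidesteps your Christoffel absorption by choosing the chart so that $\bigl(\tfrac{\partial h_{\alpha\beta}}{\partial y^{\gamma}}y^{\gamma}+2h_{\alpha\beta}\bigr)\ge(h_{\alpha\beta})$ holds as matrices near $p_0$; your normal-coordinate expansion with $\Gamma=O(|v|)$ achieves the same domination by enlarging $R_0$, which is equivalent and fine.

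One step as written is not justified: your $\phi$ is not compactly supported (the capacity minimiser lives on all of $[R,\infty)$), so the integration by parts and, more seriously, the absorption of $\tfrac12\int F'|du|_{\delta}^2\phi^2\,dv_g$ into the left-hand side presuppose that this integral is finite --- which is precisely what you may not assume, since under $(f_1)$ Proposition 3.1 forces $E_F(u)=+\infty$ when $u$ is nonconstant. The standard repair: truncate, taking on $[R,R']$ the profile $\phi(r)=\int_r^{R'}dt/A(t)\,\big/\int_R^{R'}dt/A(t)$ and $\phi\equiv0$ beyond $R'$. All integrals are then over compact sets (closed balls are compact since $M$ is complete with a pole), hence finite for a $C^2$ map, the absorption is legitimate, the weighted Dirichlet energy equals $\bigl(\int_R^{R'}dt/A(t)\bigr)^{-1}$, which decreases to $\bigl(\int_R^{\infty}dt/A(t)\bigr)^{-1}$ as $R'\to\infty$, and since $\phi\equiv1$ on $B(R)\setminus B(R_0+1)$ for every $R'>R$, the bound on $E_F^R(u)$ survives the limit. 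With this modification your argument is complete and matches the claimed inequality after folding constants.
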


 \begin{proof}
 Suppose the $F$-harmonic map is not  constant,  then by  Proposition 3.1,  the $F$-energy of $u$ must be infinite. That is, $E_F^R(u)\rightarrow \infty$
  as $R\rightarrow \infty$.

  Choose a local coordinate neighbourhood $(U, \varphi)$ of $p_0$ in $N^n$, such that $\varphi(p_0)=0$, it is clear that we can choose the $U$
  in such a way that
  \begin{equation*}
  h=h_{\alpha\beta}(y)dy^\alpha \otimes dy^\beta, \qquad y \in U
  \end{equation*}
 satisfies
 \begin{equation*}
 (\frac{\partial h_{\alpha\beta(y)}}{\partial y^\gamma}y^\gamma +2h_{\alpha\beta}(y)) \geq (h_{\alpha\beta}(y)) \qquad on \ U
 \end{equation*}
 in the  matrices sense (that is, for two $n\times n$ matrices $A$, $B$, by $A\geq B$, we mean that $A-B$ is a positive semi-definite matrix).

 Now the assumption that $u(x)\rightarrow 0$ as $r(x)\rightarrow \infty$  implies that
 there is an $R_1$ such that for $r(x)>R_1$, $u(x) \in U$, and
 \begin{equation}
 (\frac{\partial h_{\alpha\beta(u)}}{\partial u^\gamma}y^\gamma +2h_{\alpha\beta}(u)) \geq (h_{\alpha\beta}(u)) \qquad for \ \ r(x)>R_1.
 \end{equation}

 For $w \in C^2_0(M^m\setminus B(R_1), \varphi(U))$, we consider the variation $u+tw: M^m \rightarrow N^n$ defined as follows:
 \begin{equation*}
(u+tw)(q)= \begin{cases}
u(q) & \text{if $q \in B(R_1)$,}\\
\varphi^{-1}[(\varphi(u)+ tw)(q)] & \text{if $q \in M^m \setminus B(R_1)$}
\end{cases}
\end{equation*}
for sufficient small $t$.
By the definition of $F$-harmonic maps,  we have
 $$\frac{d}{dt}|_{t=0}E_F(u+tw)=0$$
 that is,
 \begin{equation}
 \underset{M^m\setminus B(R_1)}{\int}g_0^{ij}F'(\frac{|du|^2}{2})[2h_{\alpha\beta}(u)\frac{\partial u^\alpha}{\partial x_i}\frac{\partial w^\beta}{\partial x_j} + \frac{\partial h_{\alpha\beta}(u)}{\partial y^\gamma}w^\gamma \frac{\partial u^\alpha}{\partial x_i}\frac{\partial u^\beta}{\partial x_j}]
 f^{m-2}(x)dv_{g_0}=0.
 \end{equation}
 Choose $w(x)=\phi(r(x))u(x)$ in (4.2) for $\phi(t)\in C^{\infty}_0(R_1, \infty)$, we have
 \begin{eqnarray}
\nonumber &{}&\underset{M^m\setminus B(R_1)}{\int}g_0^{ij}F'(\frac{|du|^2}{2})[2h_{\alpha\beta}(u)  + \frac{\partial h_{\alpha\beta}(u)}{\partial y^\gamma}u^\gamma] \frac{\partial u^\alpha}{\partial x_i}\frac{\partial u^\beta}{\partial x_j}\phi(r(x))f^{m-2}(x)dv_{g_0}\\
 &=& -2\underset{M^m\setminus B(R_1)}{\int}g_0^{ij}F'(\frac{|du|^2}{2})h_{\alpha\beta}(u)\frac{\partial u^\alpha}{\partial x_i}u^\beta
 \frac{\partial \phi(r(x))}{\partial x_j}f^{m-2}(x)dv_{g_0}.
 \end{eqnarray}
By a standard approximation argument, (4.3) holds for Lipschitz function $\phi$ with compact support.

For $0 < \epsilon \leq 1$, define
\begin{equation*}
\varphi_\epsilon(t)= \begin{cases}
1 & \text{$t \leq 1$}; \\
1+ \frac{1-t}{\epsilon}  & \text{$1< t< 1 + \epsilon$};\\
0  & \text{ $t \geq 1+\epsilon$}.
\end{cases}
\end{equation*}
In (4.3), choose the Lipschitz function $\phi(r(x))$  to be
 \begin{equation*}
 \phi(r(x))=\varphi_{\epsilon}(\frac{r(x)}{R})(1-{\varphi}_1(\frac{r(x)}{R_1})) , \ \ R >2R_1.
 \end{equation*}
 Set $\nu^i = g_0^{ij}\frac{\partial r}{\partial x_j}$,  and thus $\nu = \nu^i \frac{\partial}{\partial x_i}$ is the outer normal vector
 field along $\partial B(R)$.\\
 Let $\epsilon \rightarrow 0$, notice
 \begin{equation*}
 \frac{\partial\varphi_\epsilon(\frac{r(x)}{R}) }{\partial x_i} =- \frac{1}{R \epsilon}\frac{\partial r(x)}{\partial x_i} \ \ for \
 R< r(x) <R(1+ \epsilon)
 \end{equation*}
 and
 \begin{eqnarray*}
 &&\underset{\epsilon \rightarrow 0}{\lim}\frac{1}{R \epsilon}\underset{B(R(1+\epsilon))\setminus B(R)}{\int}F'(\frac{|du|^2}{2})h_{\alpha\beta}(u)\frac{\partial u^\alpha}{\partial x_i}\nu^iu^\beta
 f^{m-2}(x)dv_{g_0}\\
 &&=\underset{\partial B(R)}{\int}F'(\frac{|du|^2}{2})h_{\alpha\beta}(u)\frac{\partial u^\alpha}{\partial x_i}\nu^iu^\beta
 f^{m-2}(x)ds_{g_0},
 \end{eqnarray*}
  we get ($R_2=2R_1$)
 \begin{eqnarray}
 \nonumber &{}&\underset{B(R)\setminus B(R_2)}{\int}g_0^{ij}F'(\frac{|du|^2}{2})[2h_{\alpha\beta}(u)  + \frac{\partial h_{\alpha\beta}(u)}{\partial y^\gamma}u^\gamma] \frac{\partial u^\alpha}{\partial x_i}\frac{\partial u^\beta}{\partial x_j}f^{m-2}(x)dv_{g_0} + D(R_1)\\
 &=& 2\underset{\partial B(R)}{\int}F'(\frac{|du|^2}{2})h_{\alpha\beta}(u)\frac{\partial u^\alpha}{\partial x_i}\nu^iu^\beta
 f^{m-2}(x)ds_{g_0}
 \end{eqnarray}
 where
 \begin{eqnarray*}
 D(R_1)
  &=&\underset{B(R_2)\setminus B(R_1)}{\int}g_0^{ij}F'(\frac{|du|^2}{2})\{[(2h_{\alpha\beta}(u)  + \frac{\partial h_{\alpha\beta}(u)}{\partial y^\gamma}u^\gamma) \frac{\partial u^\alpha}{\partial x_i}\frac{\partial u^\beta}{\partial x_j}(1-\varphi_1(\frac{r(x)}{R_1}))]\\
  &&-2h_{\alpha\beta}(u)\frac{\partial u^\alpha}{\partial x_i}u^\beta\frac{\partial \varphi_1(\frac{r(x)}{R_1})}{\partial x_j}\}f^{m-2}(x)dv_{g_0}.
 \end{eqnarray*}
 Set
 \begin{equation*}
 Z(R)=\underset{B(R)\setminus B(R_2)}{\int}g_0^{ij}F'(\frac{|du|^2}{2})h_{\alpha\beta}(u)\frac{\partial u^\alpha}{\partial x_i}\frac{\partial u^\beta}{\partial x_j}f^{m-2}(x)dv_{g_0} + D(R_1) \ \ for \ \ R> R_2
 \end{equation*}
 then
  \begin{equation*}
 Z'(R)=\underset{\partial B(R)}{\int}g_0^{ij}F'(\frac{|du|^2}{2})h_{\alpha\beta}(u)\frac{\partial u^\alpha}{\partial x_i}\frac{\partial u^\beta}{\partial x_j}f^{m-2}(x)ds_{g_0}.
 \end{equation*}
 Notice that
 \begin{equation*}
 h_{\alpha\beta}(u)\frac{\partial u^{\alpha}}{\partial x_i}\nu^i u^{\beta}
 = \langle \frac{\partial u^{\alpha}}{\partial x_i} dx^i \otimes \frac{\partial}{\partial y^{\alpha}}, \frac{\partial r}{\partial x_j} u^{\beta} dx^j \otimes \frac{\partial}{\partial y^{\beta}}\rangle_{g_0 \otimes h},
 \end{equation*}
 and therefore
 \begin{eqnarray}
\nonumber && \int_{\partial B(R)}F^{'}(\frac{|du|^2}{2})h_{\alpha \beta}(u)\frac{\partial u^{\alpha}}{\partial x_i} \nu^i u^{\beta} f^{m-2}ds_{g_0}\\
\nonumber  &=& \int_{\partial B(R)}\langle \frac{\partial u^{\alpha}}{\partial x_i} dx^i \otimes \frac{\partial}{\partial y^{\alpha}}, \frac{\partial r}{\partial x_j} u^{\beta} dx^j \otimes \frac{\partial}{\partial y^{\beta}}\rangle_{g_0 \otimes h}F^{'}(\frac{|du|^2}{2})f^{m-2}ds_{g_0}\\
\nonumber  &\leq& \int_{\partial B(R)}|\frac{\partial u^{\alpha}}{\partial x_i} dx^i \otimes \frac{\partial}{\partial y^{\alpha}}|
 |\frac{\partial r}{\partial x_j} u^{\beta} dx^j \otimes \frac{\partial}{\partial y^{\beta}}|F^{'}(\frac{|du|^2}{2})f^{m-2}ds_{g_0}\\
\nonumber   &\leq& \sqrt{\int_{\partial B(R)}|\frac{\partial u^{\alpha}}{\partial x_i} dx^i \otimes \frac{\partial}{\partial y^{\alpha}}|^2_{g_0 \otimes h}|F^{'}(\frac{|du|^2}{2})f^{m-2}ds_{g_0}}\\
\nonumber   && \times \sqrt{\int_{\partial B(R)}|\frac{\partial r}{\partial x_j} u^{\beta} dx^j \otimes \frac{\partial}{\partial y^{\beta}}|^2_{g_0 \otimes h}|F^{'}(\frac{|du|^2}{2})f^{m-2}ds_{g_0}}
\end{eqnarray}

\begin{eqnarray}
\nonumber  &=& \sqrt{\int_{\partial B(R)}g_0^{ij}h_{\alpha \beta}(u)\frac{\partial u^{\alpha}}{\partial x_i}\frac{\partial u^{\beta}}{\partial x_j}F^{'}(\frac{|du|^2}{2})f^{m-2}ds_{g_0}}\\
\nonumber   && \times \sqrt{\int_{\partial B(R)}g_0^{ij}h_{\alpha \beta}(u)\frac{\partial r}{\partial x_i}\frac{\partial r}{\partial x_j}u^{\alpha}u^{\beta}F^{'}(\frac{|du|^2}{2})f^{m-2}ds_{g_0}}\\
\nonumber  &=& \sqrt{\int_{\partial B(R)}g_0^{ij}h_{\alpha \beta}(u)\frac{\partial u^{\alpha}}{\partial x_i}\frac{\partial u^{\beta}}{\partial x_j}F^{'}(\frac{|du|^2}{2})f^{m-2}ds_{g_0}}\\
  && \times \sqrt{\int_{\partial B(R)}h_{\alpha \beta}(u)u^{\alpha}u^{\beta}F^{'}(\frac{|du|^2}{2})f^{m-2}ds_{g_0}},
 \end{eqnarray}
where the last equality is because of
 \begin{equation*}
 g_0^{ij}\frac{\partial r}{\partial x_i}\frac{\partial r}{\partial x_j}=\langle \nabla^0 r, \nabla^0 r \rangle_{g_0}=1.
 \end{equation*}
 By the definition of $l_F$, we have
 \begin{equation}
 \underset{B(R)\setminus B(R_2)}{\int}g_0^{ij}F'(\frac{|du|^2}{2})h_{\alpha\beta}(u)\frac{\partial u^\alpha}{\partial x_i}\frac{\partial u^\beta}{\partial x_j}f^{m-2}(x)dv_{g_0} \geq 2l_F \underset{B(R)\setminus B(R_2)}{\int}F(\frac{|du|^2}{2})dv_g.
 \end{equation}
Since $l_F>0$ and $E_F^R \rightarrow \infty$ as $R\rightarrow \infty$, there is an $R_3 \geq R_2$, such that $Z(R)>0$ for $R \geq R_3$.
 Thus (4.1), (4.4) and (4.5) imply
\begin{equation*}
Z(R)^2 \leq CZ'(R)(\underset{\partial B(R)}{\int}F'(\frac{|du|^2}{2})h_{\alpha\beta}(u)u^\alpha u^\beta f^{m-2}(x)ds_{g_0})  \ \ for  \ \ R>R_3.
 \end{equation*}
 If we denote
\begin{equation}
 M(R)=\underset{\partial B(R)}{\int}F'(\frac{|du|^2}{2})h_{\alpha\beta}(u)u^\alpha u^\beta f^{m-2}(x)ds_{g_0},
 \end{equation}
 then for $R_4 \geq R \geq R_3$, it follows that
 \begin{equation*}
 \int_{R}^{R_4}(\frac{-1}{Z(r)})'dr \geq C \int_R^{R_4}\frac{1}{M(r)}dr.
 \end{equation*}
 Let $R_4 \rightarrow \infty$ and  notice that $Z(R)>0$, we have
 \begin{equation*}
 \frac{1}{Z(R)}\geq C \int_R^{\infty}\frac{1}{M(r)}dr.
 \end{equation*}
Thus
 \begin{equation*}
 Z(R) \leq C\frac{1}{\int_R^{\infty}\frac{1}{M(r)}dr} \ \ for \ R > R_3.
 \end{equation*}
 By  $F'(\frac{|du|^2}{2})< +\infty$ and the fact that $u(x)\rightarrow 0$ as $r(x)\rightarrow \infty$, we get
 \begin{equation*}
 M(R) \leq C \eta(R) \underset{\partial B(R)}{\int}f^{m-2}(x)ds_{g_0},
 \end{equation*}
 where $\eta(R)$  is chosen in such a way that\\
 (i) $\eta(R)$ is nonincreasing on $(R_3, \infty)$ and $\eta(R)\rightarrow 0$ as $R\rightarrow \infty$;\\
 (ii)  $\eta(R) \geq \underset{r(x)=R}{\max}\{h_{\alpha\beta}(u)u^\alpha u^\beta\}.$\\
 Then by $(f_2)$, we derive
 \begin{equation*}
\int_R^{\infty}\frac{1}{M(r)}dr \geq \frac{C}{\eta(R)}\int_R^{\infty}\frac{1}{\underset{\partial B(r)}{\int}f^{m-2}(x)ds_{g_0}}dr \geq \frac{C}{\eta(R)}R^{-\sigma}.
\end{equation*}
Thus
\begin{equation*}
Z(R) \leq C\eta(R)R^\sigma \ \  for \ R \geq R_3.
\end{equation*}
Therefore, using (4.6), we obtain
\begin{equation*}
E_F^R(u) \leq C(\frac{\eta(R)}{2l_F} + \frac{c(u)}{R^\sigma})R^\sigma.
\end{equation*}
 \end{proof}

\begin{remark}
 When the $F$-harmonic map $u$ has the unique continuation property, then by Lemma 3.1, the conclusion of Proposition 4.1 also holds for $u$ with the condition $(f_1)$ replaced by $(\widetilde{f}_1)$.
\end{remark}

\section{The main results and their proof}

Combining Proposition 3.1 and Proposition 4.1, we have the following Liouville type theorem.
\begin{theorem}
Let $u$: $(M^m, f^2g_0)\rightarrow (N^n, h)$ be a  $C^2$ $F$-harmonic map. Suppose that $f$ satisfies $(f_1)$ and $(f_2)$, and that  the $F$-lower degree
$l_F>0$ and $F'(\frac{|du|^2}{2})< +\infty$.
If $u(x)\rightarrow p_o \in N^n$ as $r(x)\rightarrow \infty$, then $u$ is a constant map.
 \end{theorem}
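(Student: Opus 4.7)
The plan is to argue by contradiction and let the lower and upper $F$-energy growth estimates collide. Suppose, for contradiction, that $u$ is not constant. Since all hypotheses of Proposition 3.1 hold (in particular $(f_1)$), that proposition produces a constant $c_1=c(u)>0$, depending only on $u$, and a positive $\sigma$ (the same $\sigma$ as in $(f_1)$) such that
\begin{equation*}
\int_{B(R)}F\!\left(\tfrac{|du|^2}{2}\right)dv_g \;\geq\; c_1 R^{\sigma}
\end{equation*}
for all sufficiently large $R$.

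On the other hand, the remaining assumptions of Theorem 5.1 (namely $(f_2)$, $l_F>0$, $F'(|du|^2/2)<\infty$, and $u(x)\to p_0$ as $r(x)\to\infty$) exactly match the premises of Proposition 4.1. Since $u$ is nonconstant, the alternative conclusion of that proposition furnishes $R_0$, a constant $c_2$, and a function $\eta(R)\to 0$ as $R\to\infty$ with
\begin{equation*}
E_F^R(u)\;\leq\; C\left(\frac{\eta(R)}{2l_F}+\frac{c_2}{R^{\sigma}}\right)R^{\sigma}
\qquad\text{for } R\geq R_0.
\end{equation*}

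I would then combine the two displays: for every $R$ large enough,
\begin{equation*}
c_1 R^{\sigma} \;\leq\; \frac{C\,\eta(R)}{2l_F}\,R^{\sigma}+C c_2,
\end{equation*}
and after dividing by $R^{\sigma}$,
\begin{equation*}
c_1 \;\leq\; \frac{C\,\eta(R)}{2l_F}+\frac{Cc_2}{R^{\sigma}}.
\end{equation*}
Passing to the limit $R\to\infty$, the first term on the right tends to $0$ because $\eta(R)\to 0$, while the second tends to $0$ because $\sigma>0$. This forces $c_1\leq 0$, contradicting $c_1>0$. Hence $u$ must be constant.

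The argument itself is little more than bookkeeping; the substantive work has already been done in Propositions 3.1 and 4.1. The only point requiring a moment's care is that the same positive exponent $\sigma$ coming from $(f_1)$ appears in both bounds, and that the two constants denoted $c(u)$ in those propositions are independent positive quantities, so there is no circular dependence when they are compared. No further analytic input is needed.
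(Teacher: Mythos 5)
Your proposal is correct and is exactly the paper's argument: the paper proves Theorem 5.1 simply by combining Proposition 3.1 (lower growth bound $c(u)R^{\sigma}$ for nonconstant $u$) with Proposition 4.1 (upper bound $C(\frac{\eta(R)}{2l_F}+\frac{c(u)}{R^{\sigma}})R^{\sigma}$), and deriving the contradiction as $R\to\infty$, just as you do. Your added remark that the two constants $c(u)$ are independent and that the same $\sigma$ from $(f_1)$ enters both bounds is a correct and worthwhile clarification of the bookkeeping the paper leaves implicit.
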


\begin{remark}
Assuming either $\underset{t\geq 0}{sup} F^{'}(t)< + \infty$ or $\underset{M}{sup} |du|^2<+ \infty$, we may deduce that $F^{'}(\frac{|du|^2}{2})< C$.
\end{remark}

By  Proposition 3.2 and  Remark 4.1, we have the following theorem for $F$-harmonic map with the unique continuation property, which includes the case of harmonic maps in Jin's paper ([Ji]).
\begin{theorem}
Let $u$: $(M^m, f^2g_0)\rightarrow (N^n, h)$ be a  $C^2$ $F$-harmonic map with the unique continuation property. Suppose that  $f$ satisfies $(\widetilde{f}_1)$ and $(f_2)$, and that the $F$-lower degree
$l_F>0$ and $F'(\frac{|du|^2}{2})< +\infty$.
If $u(x)\rightarrow p_o \in N^n$ as $r(x)\rightarrow \infty$, then $u$ is a constant map.
 \end{theorem}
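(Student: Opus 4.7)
The plan is to derive a contradiction from the two-sided energy growth estimates already established, exactly in parallel to the proof of Theorem 5.1 but with the relaxed hypothesis $(\widetilde{f}_1)$ in place of $(f_1)$. Suppose toward a contradiction that $u$ is not a constant map.

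First, I would invoke Proposition 3.2 directly. The hypotheses match: $u$ is a $C^2$ $F$-harmonic map with the unique continuation property and $f$ satisfies $(\widetilde{f}_1)$. This yields the lower energy growth rate
\begin{equation*}
E_F^R(u) = \int_{B(R)} F\!\left(\tfrac{|du|^2}{2}\right) dv_g \;\geq\; c(u) R^\sigma
\end{equation*}
for all sufficiently large $R$, with $c(u) > 0$ depending only on $u$.

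Second, I would appeal to Remark 4.1, which asserts that Proposition 4.1 remains valid with $(f_1)$ weakened to $(\widetilde{f}_1)$ whenever the $F$-harmonic map $u$ has the unique continuation property. The remaining assumptions of Proposition 4.1 — namely $f$ satisfies $(f_2)$, $l_F > 0$, $F'(|du|^2/2) < +\infty$, and the asymptotic behavior $u(x) \to p_0$ as $r(x) \to \infty$ — are exactly the hypotheses of Theorem 5.2. Hence we obtain the upper energy growth rate
\begin{equation*}
E_F^R(u) \;\leq\; C\!\left(\tfrac{\eta(R)}{2 l_F} + \tfrac{c(u)}{R^\sigma}\right) R^\sigma
\end{equation*}
for $R \geq R_0$, where $\eta(R) \to 0$ as $R \to \infty$.

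Dividing the two inequalities by $R^\sigma$ and passing to the limit $R \to \infty$, the right-hand side of the upper estimate tends to zero, whereas the lower estimate gives $\liminf_{R\to\infty} E_F^R(u)/R^\sigma \geq c(u) > 0$. This contradiction forces $u$ to be constant. There is essentially no new obstacle in this theorem beyond the earlier work: all the analytic difficulty (the stress-energy computation, the careful choice of test variation $\phi(r(x)) u(x)$, the Cauchy–Schwarz step for $M(R)$, and the unique-continuation argument used to prove Lemma 3.1 and hence Proposition 3.2) has already been absorbed into Propositions 3.2 and 4.1 and Remark 4.1. The one place I would be careful is checking that the two conclusions are compatible simultaneously — i.e., that the constant $\sigma$ appearing in the lower estimate is the same $\sigma$ appearing in the upper estimate — but this is immediate since both are the constant provided by $(\widetilde{f}_1)$, and $(f_2)$ is stated using the very same $\sigma$.
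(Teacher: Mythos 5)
Your proposal is correct and matches the paper's own argument, which proves Theorem 5.2 precisely by combining Proposition 3.2 (the lower growth rate under $(\widetilde{f}_1)$ and unique continuation) with Remark 4.1 (Proposition 4.1 with $(f_1)$ replaced by $(\widetilde{f}_1)$) and comparing the two growth rates. Your additional check that the same $\sigma$ from $(\widetilde{f}_1)$ and $(f_2)$ appears in both estimates is exactly the compatibility the paper relies on implicitly.
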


\begin{corollary}
Let $u$: $(M^m, g_0)\rightarrow (N^n, h)$ be a  $C^2$ $F$-harmonic map. There are positive constants $C$, $\sigma$ and $R_0$ such that
\begin{equation}
\frac{(m-1)}{2}\lambda_{\min} + 1
- d_F\max\{2,\lambda_{\max}\} \geq \sigma,
\end{equation}
\begin{equation}
(\int_R^{\infty}\frac{dr}{vol(\partial B(r))})^{-1} \leq CR^{\sigma} \ \ for \ \ R>R_0.
\end{equation}
 The lower degree
$l_F>0$ and $F'(\frac{|du|^2}{2})< +\infty$.
If $u(x)\rightarrow p_o \in N^n$ as $r(x)\rightarrow \infty$, then $u$ is  a constant map.
 \end{corollary}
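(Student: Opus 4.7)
The plan is to recognize this corollary as the specialization of Theorem 5.1 to the conformal factor $f \equiv 1$, so that the conformal metric $f^2 g_0$ reduces to $g_0$ itself. I would first check that each hypothesis of the corollary matches a hypothesis of Theorem 5.1 under this choice.

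With $f \equiv 1$, we have $\frac{\partial \log f}{\partial r} \equiv 0$, so in particular the sign condition preceding $(f_1)$ is trivially satisfied, and the term $(m-2d_F) r \frac{\partial \log f}{\partial r}$ in $(f_1)$ drops out. What remains of $(f_1)$ is exactly inequality (5.1). Similarly, $\int_{\partial B(r)} f^{m-2}(x)\, ds_{g_0} = \mathrm{vol}(\partial B(r))$, so $(f_2)$ becomes precisely (5.2). The volume element $dv_g$ agrees with $dv_{g_0}$, so the remaining hypotheses ($l_F > 0$, $F'(|du|^2/2) < +\infty$, $u(x) \to p_0$ as $r(x) \to \infty$) transfer verbatim.

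Once this matching is verified, Theorem 5.1 applies directly and forces $u$ to be constant. No additional estimates or computations are required beyond this translation of hypotheses. The only mild point to check is that the choice of $\nu = f^{-1}\partial/\partial r$ in $\S 3$ reduces to the standard outward unit normal $\partial/\partial r$ when $f = 1$, so that all the intermediate integral identities used in the proof of Theorem 5.1 (via Propositions 3.1 and 4.1) carry through without change.

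There is essentially no obstacle here; the corollary is an immediate consequence of Theorem 5.1 obtained by specializing the conformal factor. The only conceivable subtlety is ensuring that the assumed sign convention on $\partial \log f/\partial r$ from $\S 3$ is compatible with $f \equiv 1$, but since both $\geq 0$ and $\leq 0$ cases trivially hold when the derivative vanishes, this poses no issue.
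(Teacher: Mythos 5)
Your proposal is correct and coincides with the paper's own (implicit) derivation: Corollary 5.1 is exactly Theorem 5.1 specialized to $f\equiv 1$, under which the sign condition on $\frac{\partial \log f}{\partial r}$ holds trivially, $(f_1)$ reduces to (5.1), and $(f_2)$ reduces to (5.2) since $\int_{\partial B(r)} f^{m-2}\, ds_{g_0} = \mathrm{vol}(\partial B(r))$. Your matching of hypotheses is accurate and nothing further is needed.
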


When applied the above results  to some concrete pinched manifolds, we need the following lemmas.
\begin{lemma}
Let $(M,g_0)$ be an $m$-dimensional complete Riemannian manifold with a pole $x_0$ and let $r(x)$ be the distance function relative to $x_0$. Assume
that there exist two positive functions $h_1(r)$ and $h_2(r)$ such that
\begin{equation*}
h_1(r)[g_0-dr\otimes dr] \leq Hess(r) \leq h_2(r)[g_0-dr\otimes dr]
\end{equation*}
in the sense of  quadratic forms, then
\begin{equation*}
\frac{(m-1)}{2}\lambda_{\min} + 1
- d_F\max\{2,\lambda_{\max}\} \geq (m-1)h_1(r)r + 1- 2d_F\max\{1,h_2(r)r\}.
\end{equation*}
\end{lemma}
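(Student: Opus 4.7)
The plan is to translate the pointwise Hessian comparison for $r$ into a pointwise bound on the eigenvalues $\lambda_{\min}$ and $\lambda_{\max}$ of $\mathrm{Hess}_{g_0}(r^2)-2dr\otimes dr$, then simply estimate the left-hand side term by term.

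First I would compute the relationship between $\mathrm{Hess}(r)$ and $\mathrm{Hess}(r^2)$. Using $\nabla(r^2)=2r\nabla r$ and the product rule for the covariant derivative, one gets
\begin{equation*}
\mathrm{Hess}_{g_0}(r^2)=2\,dr\otimes dr+2r\,\mathrm{Hess}(r),
\end{equation*}
so that $\mathrm{Hess}_{g_0}(r^2)-2dr\otimes dr=2r\,\mathrm{Hess}(r)$. Multiplying the hypothesis by $2r>0$ (legitimate in the sense of quadratic forms) then yields
\begin{equation*}
2rh_1(r)[g_0-dr\otimes dr]\;\leq\;\mathrm{Hess}_{g_0}(r^2)-2dr\otimes dr\;\leq\;2rh_2(r)[g_0-dr\otimes dr].
\end{equation*}

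Next I would read off the consequences for the relevant eigenvalues. Since $g_0-dr\otimes dr$ has eigenvalue $0$ on the radial direction $\partial/\partial r$ and eigenvalue $1$ on the orthogonal complement, the quadratic form bound above pins the eigenvalues of $\mathrm{Hess}_{g_0}(r^2)-2dr\otimes dr$ on the non-radial directions into the interval $[2rh_1(r),\,2rh_2(r)]$ (the radial eigenvalue is already $0$, consistently with the fact that $\partial/\partial r$ has $\mathrm{Hess}(r^2)$-eigenvalue $2$, as noted just before the definition of $\lambda_{\min},\lambda_{\max}$). Consequently
\begin{equation*}
\lambda_{\min}\geq 2rh_1(r),\qquad \lambda_{\max}\leq 2rh_2(r).
\end{equation*}

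Finally I would plug these two scalar inequalities into the left-hand side of the claimed estimate. The first gives $\tfrac{m-1}{2}\lambda_{\min}\geq(m-1)h_1(r)r$. For the $\max$ term, one uses
\begin{equation*}
\max\{2,\lambda_{\max}\}\leq\max\{2,2rh_2(r)\}=2\max\{1,h_2(r)r\},
\end{equation*}
so that $-d_F\max\{2,\lambda_{\max}\}\geq-2d_F\max\{1,h_2(r)r\}$. Adding these two bounds together with the constant $+1$ produces exactly the asserted inequality.

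The argument is essentially bookkeeping; the only step that requires any genuine care is identifying $\mathrm{Hess}_{g_0}(r^2)-2dr\otimes dr$ with $2r\,\mathrm{Hess}(r)$ and observing that the radial direction is a common eigenvector with eigenvalue $0$, so that the hypothesis on $\mathrm{Hess}(r)$ (which is phrased against $g_0-dr\otimes dr$ and hence is trivial in the radial direction) is the right object to compare to $\lambda_{\min},\lambda_{\max}$. Once this identification is in place, the remaining inequalities are immediate.
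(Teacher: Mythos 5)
Your proof is correct and takes essentially the same route as the paper: the paper's entire proof consists of the identity $\mathrm{Hess}(r^2)=2r\,\mathrm{Hess}(r)+2\,dr\otimes dr$, stated as immediately yielding the result. You simply make explicit the eigenvalue bookkeeping the paper leaves implicit, namely that $2r\,\mathrm{Hess}(r)$ preserves the orthogonal complement of $\partial/\partial r$ so the hypothesis gives $\lambda_{\min}\geq 2rh_1(r)$ and $\lambda_{\max}\leq 2rh_2(r)$ (consistent with the paper's convention, visible in its use of $\lambda_{\min}=\lambda_{\max}=2$ for Euclidean space, that the trivial radial eigenvalue is excluded).
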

\begin{proof}
Applying the Hessian operator to the composed function $r^2$, we have
\begin{equation*}
Hess(r^2)= 2r Hess(r)+ 2dr \otimes dr
\end{equation*}
which immediately yields the result.
\end{proof}

\begin{lemma}
(cf. [GW], [DW], [PRS])
Let ($M, g_0)$ be a complete Riemannian manifold with a pole  $x_0$ and let $r$ be the distance function relative to $x_0$.
Denote by $K_r$ the radial curvature of $M$.

(i) If $-\alpha^2\leq K_r \leq -\beta^2$ with $\alpha> 0$, $\beta> 0$, then
$$\beta \coth(\beta r)[g-dr \otimes dr] \leq Hess(r) \leq \alpha \coth(\alpha r)[g-dr \otimes dr].$$

(ii) If $-\frac{A}{(1+r^2)^{1+\epsilon}} \leq K_r \leq \frac{B}{(1+r^2)^{1+\epsilon}}$ with $\epsilon>0$, $A \geq 0$, $0\leq B < 2\epsilon$,
then
$$\frac{1-\frac{B}{2\epsilon}}{r} [g-dr \otimes dr] \leq Hess(r) \leq \frac{e^{\frac{A}{2\epsilon}}}{r}[g-dr \otimes dr]. $$

(iii) If $-\frac{a^2}{1+r^2} \leq K_r \leq \frac{b^2}{1+r^2}$ with $a\geq 0$, $b^2 \in[0,1/4]$, then
$$\frac{1+\sqrt{1-4b^2}}{2r} [g-dr \otimes dr] \leq Hess(r) \leq \frac{1+\sqrt{1+4a^2}}{2r}[g-dr \otimes dr].$$
\end{lemma}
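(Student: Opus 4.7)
The lemma is a Hessian comparison result under two-sided radial curvature bounds, and my plan is the standard Riccati/Jacobi-field comparison, which is what the cited references [GW], [DW], [PRS] ultimately use. Fix a unit-speed minimizing geodesic $\gamma$ emanating from the pole $x_0$ (such geodesics exist globally because $x_0$ is a pole). For a unit vector $e\perp\gamma'(r)$, parallel transported along $\gamma$, define
\begin{equation*}
u(r):=\mathrm{Hess}(r)(e,e)=\langle\nabla_e\nabla r,e\rangle.
\end{equation*}
A direct computation using $\nabla_{\gamma'}\nabla r=0$ and the definition of the sectional curvature of the plane spanned by $\gamma'$ and $e$ gives the Riccati equation
\begin{equation*}
u'(r)+u(r)^{2}+K_{r}=0,\qquad u(r)\sim \tfrac{1}{r}\ \text{as}\ r\to 0^{+}.
\end{equation*}
Since $\mathrm{Hess}(r)$ annihilates $\partial/\partial r$, bounding $u(r)$ pointwise from above and below in terms of $r$ yields the quadratic-form inequalities claimed for $\mathrm{Hess}(r)$ on the orthogonal complement $g-dr\otimes dr$.

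The main step is a standard Sturm-type comparison: if $\underline K(r)\le K_{r}\le \overline K(r)$, and $\underline u,\overline u$ are the corresponding model solutions to $v'+v^{2}+\underline K=0$, $v'+v^{2}+\overline K=0$ with the same singular initial condition $v(r)\sim 1/r$, then $\overline u(r)\le u(r)\le \underline u(r)$ throughout $(0,\infty)$. This follows by considering $w=u-\underline u$ (resp.\ $u-\overline u$), observing that $w$ satisfies a linear inequality of the form $w'+(u+\underline u)w\le 0$ (resp.\ $\ge 0$), and using $w(0^{+})=0$ with Gr\"onwall. So the whole proof reduces to writing down, for each of the three hypotheses, the explicit model solutions.

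For (i), the constant-curvature model $\underline K\equiv -\alpha^{2}$ gives $\underline u(r)=\alpha\coth(\alpha r)$ (from $f''-\alpha^{2}f=0$, $f(0)=0$, $f'(0)=1$, and $u=f'/f$), and $\overline K\equiv -\beta^{2}$ gives $\overline u(r)=\beta\coth(\beta r)$; this is immediate. For (ii) and (iii) the model ODEs have no closed-form elementary solutions, so the job is to produce explicit super- and subsolutions of $v'+v^{2}+K=0$ of the form $v(r)=c/r$ on $(0,\infty)$ with the correct $1/r$ singularity at $0$, where $c$ is exactly the constant appearing in the stated bounds. Substituting $v=c/r$ into the Riccati equation gives $v'+v^{2}=(c^{2}-c)/r^{2}$, and one checks that $c=1-B/(2\epsilon)$ in (ii) lower, $c=e^{A/(2\epsilon)}$ in (ii) upper, and $c=\tfrac12(1+\sqrt{1-4b^{2}})$, $c=\tfrac12(1+\sqrt{1+4a^{2}})$ in (iii) do satisfy the required one-sided inequality, using that $r^{2}/(1+r^{2})^{1+\epsilon}\le 1/(2\epsilon)$ for (ii) and $r^{2}/(1+r^{2})\le 1$ for (iii).

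The only mildly delicate step is verifying in (ii) the upper bound $c=e^{A/2\epsilon}$, which does not come from the algebraic relation $c^{2}-c=\text{const}$ but rather from integrating a differential inequality satisfied by $ru(r)$; I expect this to be the one place where a crude pointwise ansatz $c/r$ is not sharp enough and an integrating-factor argument on $(ru)'$ is needed. Everything else is direct verification plus the Sturm comparison lemma.
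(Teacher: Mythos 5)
The paper offers no proof of Lemma 5.2 at all --- it is quoted verbatim from [GW], [DW], [PRS] --- so your attempt must be measured against the standard comparison arguments in those references, and your overall framework (Riccati comparison along radial geodesics from the pole) is indeed the right one. But there are two genuine gaps. First, your scalar Riccati \emph{equation} is false in general: for a parallel unit field $e\perp\gamma'$, writing $S$ for the shape operator $\nabla\nabla r$, one computes $u'=-|Se|^{2}-K$, and since $|Se|^{2}\geq\langle Se,e\rangle^{2}=u^{2}$ this gives only the one-sided inequality $u'+u^{2}+K\leq 0$ (equality holds only when $e$ remains an eigenvector of $S$, as in rotationally symmetric models). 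That inequality has the right sign for the \emph{upper} Hessian bounds, but your symmetric Gr\"onwall argument cannot produce the \emph{lower} bounds from it: for those one must compare the full matrix Riccati equation $S'+S^{2}+R=0$ (Eschenburg--Heintze, or Lemma 2.3 in [PRS]), or run a barrier argument on the smallest eigenvalue of $S$. This is standard and fixable, but it is missing as written, and it affects exactly half of each of (i)--(iii).

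Second, and more concretely, your $c/r$ ansatz \emph{fails} for the lower bound in (ii). With $c=1-\frac{B}{2\epsilon}$, the function $v=c/r$ is a subsolution of $v'+v^{2}+\frac{B}{(1+r^{2})^{1+\epsilon}}\leq 0$ if and only if $c(1-c)\geq B\,\frac{r^{2}}{(1+r^{2})^{1+\epsilon}}$ for all $r$, i.e.\ if and only if $\frac{r^{2}}{(1+r^{2})^{1+\epsilon}}\leq\frac{c}{2\epsilon}$; but the left-hand side has supremum $\epsilon^{\epsilon}/(1+\epsilon)^{1+\epsilon}>0$, independent of $B$, while $c\to 0$ as $B\to 2\epsilon^{-}$, so the required inequality fails once $B$ is close to $2\epsilon$. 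The correct route (this is the actual [GW]/[PRS] argument) compares with the honest model solution $f$ of $f''+\frac{B}{(1+r^{2})^{1+\epsilon}}f=0$, $f(0)=0$, $f'(0)=1$: while $f>0$ it is concave, so $f\leq r$ and $f'(r)\geq 1-\int_{0}^{r}\frac{Bt}{(1+t^{2})^{1+\epsilon}}\,dt\geq 1-\frac{B}{2\epsilon}>0$, whence $f'/f\geq\bigl(1-\frac{B}{2\epsilon}\bigr)/r$, and Hessian comparison against $f'/f$ gives (ii) lower. Ironically, the step you flagged as delicate --- the upper bound $e^{A/2\epsilon}/r$ in (ii) --- actually \emph{does} go through with the crude ansatz and needs no integrating factor: $c=e^{A/2\epsilon}$ gives $c^{2}-c\geq\frac{A}{2\epsilon}$ (since $e^{x}(e^{x}-1)\geq x$), and $\frac{r^{2}}{(1+r^{2})^{1+\epsilon}}\leq\frac{1}{2\epsilon}$ holds because $(1+1/\epsilon)^{1+\epsilon}\geq e>2$, which follows from $\ln(1+x)\geq x/(1+x)$. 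Your case (i) and both bounds in (iii) are fine modulo the matrix-Riccati point above, since there $c^{2}-c$ matches the model curvature exactly.
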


By Theorem 5.1, Lemma 5.1 and Lemma 5.2, we can get the following theorem.
\begin{theorem}
Let $u$: $(M^m, f^2g_0)\rightarrow (N^n, h)$ be a  $C^2$ $F$-harmonic map. Suppose $f$ satisfies  $\frac{\partial \log f}{\partial r} \geq 0$ and
there exist a constant $\sigma>0$ such that the inequality in  $(f_2)$ holds. The $F$-lower
degree $l_F>0$ and $F'(\frac{|du|^2}{2})< +\infty$.
Suppose   $M^m$ is a complete Riemannian manifold with a pole and
 its radial curvature satisfies one of the following three conditions:

(i) $-\alpha^2\leq K_r \leq -\beta^2$ with  $\alpha> 0$, $\beta> 0$ and
$r\frac{\partial \log f}{\partial r}(m-2d_F) +1 + (m-1)\beta r\coth(\beta r)-2 d_F \alpha r\coth(\alpha r) \geq \sigma $;

(ii) $-\frac{A}{(1+r^2)^{1+\epsilon}} \leq K_r \leq \frac{B}{(1+r^2)^{1+\epsilon}}$ with $\epsilon>0$, $A \geq 0$, $0 \leq B < 2\epsilon$
and $r\frac{\partial \log f}{\partial r}(m-2d_F) +1 + (m-1)(1-\frac{B}{2\epsilon})-2 d_F e^{\frac{A}{2\epsilon}} \geq \sigma$;

(iii) $-\frac{a^2}{1+r^2} \leq K_r \leq \frac{b^2}{1+r^2}$ with $a\geq 0$, $b^2 \in[0,1/4]$ and
$r\frac{\partial \log f}{\partial r}(m-2d_F) +1 + (m-1)\frac{1+\sqrt{1-4b^2}}{2}- d_F (1+\sqrt{1+4a^2}) \geq \sigma $.

If $u(x)\rightarrow p_o \in N^n$ as $r(x)\rightarrow \infty$, then $u$ is a constant map.
 \end{theorem}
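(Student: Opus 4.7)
The plan is to reduce Theorem 5.3 to Theorem 5.1 by verifying condition $(f_1)$ under each of the three radial-curvature hypotheses. The hypothesis $(f_2)$ and the growth conditions on $F$ ($l_F>0$ and $F'(|du|^2/2)<+\infty$) are already assumed explicitly, and since $\partial \log f / \partial r \geq 0$, we are in the first branch of $(f_1)$. Once we show the inequality
\[
(m-2d_F)r\,\frac{\partial \log f}{\partial r} + \frac{m-1}{2}\lambda_{\min} + 1 - d_F\max\{2,\lambda_{\max}\} \geq \sigma
\]
holds on all of $M\setminus\{x_0\}$, Theorem 5.1 immediately yields that $u$ is constant.

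First I would separate the term $(m-2d_F)r\,\partial \log f/\partial r$, which appears untouched in the assumed inequalities of (i)--(iii), and focus on bounding the remaining quantity $\tfrac{m-1}{2}\lambda_{\min}+1-d_F\max\{2,\lambda_{\max}\}$ using Lemma~5.1. This reduces the task to producing functions $h_1(r),h_2(r)$ with $h_1(r)[g_0-dr\otimes dr]\le \mathrm{Hess}(r) \le h_2(r)[g_0-dr\otimes dr]$; Lemma~5.2 delivers these in each of the three geometric settings, namely $(h_1,h_2)=(\beta\coth(\beta r),\alpha\coth(\alpha r))$ in case (i), $((1-\tfrac{B}{2\epsilon})/r,\; e^{A/(2\epsilon)}/r)$ in case (ii), and $((1+\sqrt{1-4b^2})/(2r),\;(1+\sqrt{1+4a^2})/(2r))$ in case (iii).

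Next I would simplify the $\max\{1,h_2(r)r\}$ appearing in Lemma~5.1's conclusion. In case (i) one uses the elementary fact $\alpha r\coth(\alpha r)\ge 1$ for $\alpha r>0$; in case (ii) the factor $e^{A/(2\epsilon)}\ge 1$ is clear; in case (iii) $(1+\sqrt{1+4a^2})/2\ge 1$ since $a\ge 0$. Thus in all three cases $\max\{1,h_2(r)r\}=h_2(r)r$, so Lemma~5.1 gives
\[
\tfrac{m-1}{2}\lambda_{\min}+1-d_F\max\{2,\lambda_{\max}\} \geq (m-1)h_1(r)r + 1 - 2d_F\, h_2(r)r.
\]
Adding $(m-2d_F)r\,\partial \log f/\partial r$ to both sides and substituting the three choices of $(h_1,h_2)$ produces precisely the left-hand sides of the pinching inequalities listed in (i), (ii), (iii). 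Each of those is assumed to be $\ge \sigma$, so $(f_1)$ is verified, and Theorem~5.1 finishes the proof.

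I do not expect any serious obstacle: the argument is essentially a bookkeeping check that the Hessian comparisons from Lemma~5.2 produce the desired algebraic expressions in $(f_1)$. The only mildly delicate point is the $\max\{1,h_2(r)r\}$ simplification in case (iii), where one must confirm that the sharp constant $(1+\sqrt{1+4a^2})/2$ dominates $1$ (it does, with equality only at $a=0$); this is what allows the bound $2d_F\max\{1,h_2(r)r\}=d_F(1+\sqrt{1+4a^2})$ and matches the hypothesis exactly. Everything else is direct substitution.
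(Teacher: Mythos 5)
Your proposal is correct and matches the paper's own argument exactly: the paper proves Theorem 5.3 precisely by combining Theorem 5.1 with Lemma 5.1 and Lemma 5.2, i.e.\ by verifying the first branch of $(f_1)$ via the Hessian comparisons, just as you do. Your substitution of $(h_1,h_2)$ in each case and the observation that $h_2(r)r\geq 1$ throughout (so $\max\{1,h_2(r)r\}=h_2(r)r$) is the same bookkeeping the paper leaves implicit.
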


\begin{remark}
(i) If $(m- 1)\beta -2d_F \alpha \geq 0$, then we have
\begin{eqnarray*}
(m-1)\beta r \coth(\beta r) - 2d_F \alpha r \coth( \alpha r) &\geq& \beta r \coth(\beta r)[(m-1)-2d_F \frac{\alpha r \coth( \alpha r)}{\beta r \coth(\beta r)}]\\
&\geq& (m-1)-2d_F \frac{\alpha}{\beta}
\end{eqnarray*}
since $\beta r \coth(\beta r)>1$ for $r>0$, and $\frac{\alpha r \coth( \alpha r)}{\beta r \coth(\beta r)} <1$ for $0< \beta <\alpha$, and $\coth$
is a decreasing function. Thus the conclusion of the first case in Theorem 5.3 still holds if  $r\frac{\partial \log f}{\partial r}(m-2d_F) +m -2d_F \frac{\alpha}{\beta} \geq \sigma$.

(ii) If $f$ satisfies $\frac{\partial \log f}{\partial r} \leq 0$ and the condition $(f_2)$, then the  conclusion in Theorem 5.3 still holds provided that one replaces $m-2d_F$ by $m-2l_F$ in (i) (ii) (iii).

(iii) M. Kassi ([Ka]) proved a Liouville theorem for $F$-harmonic maps from various pinched manifolds, which has  finite $F$-energy and some restrictions on $d_F$.
\end{remark}

Taking $f=1$, we have the following corollary.
\begin{corollary}
Let $u$: $(M^m, g_0)\rightarrow (N^n, h)$ be a  $C^2$ $F$-harmonic map.  The $F$-lower
degree $l_F>0$ and $F'(\frac{|du|^2}{2})< +\infty$.
Suppose  $M^m$ is a complete Riemannian manifold with a pole and
 its radial curvature satisfies one of the following two conditions:

(i) $-\frac{A}{(1+r^2)^{1+\epsilon}} \leq K_r \leq \frac{B}{(1+r^2)^{1+\epsilon}}$ with $\epsilon>0$, $A \geq 0$, $0 \leq B < 2\epsilon$
and $1 + (m-1)(1-\frac{B}{2\epsilon})-2 d_F e^{\frac{A}{2\epsilon}} \geq m-2$;

(ii) $-\frac{a^2}{1+r^2} \leq K_r \leq \frac{b^2}{1+r^2}$ with $a\geq 0$,  $b^2 \in[0,1/4]$ and
$1 + (m-1)\frac{1+\sqrt{1-4b^2}}{2}- d_F(1+\sqrt{1+4a^2})  \geq (m-1)A^{'}-1$, where $A^{'}=\frac{1+\sqrt{1+4a^2}}{2}$.

If $u(x)\rightarrow p_o \in N^n$ as $r(x)\rightarrow \infty$, then $u$ is a constant map.
 \end{corollary}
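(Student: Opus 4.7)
The plan is to specialize Theorem 5.1 to the case $f\equiv 1$, so that $g=g_0$, $\partial\log f/\partial r\equiv 0$, and $\int_{\partial B(r)}f^{m-2}\,ds_{g_0}=\mathrm{vol}_{g_0}(\partial B(r))$; hypothesis $(f_2)$ then becomes the purely geometric volume-growth inequality $(\int_R^\infty dr/\mathrm{vol}(\partial B(r)))^{-1}\le CR^{\sigma}$ that is displayed in the statement of Corollary 5.1. It therefore suffices to derive both $(f_1)$ and this volume bound with a common positive $\sigma$ from the radial-curvature pinching in each of (i), (ii); the remaining hypotheses $l_F>0$, $F'(|du|^2/2)<+\infty$, and $u(x)\to p_0$ at infinity are already given, so Theorem 5.1 will then close the argument.

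To verify $(f_1)$, I would feed the curvature pinching into Lemma 5.2 to obtain Hessian comparison bounds $h_1(r)[g_0-dr\otimes dr]\le\mathrm{Hess}(r)\le h_2(r)[g_0-dr\otimes dr]$ with explicit values of $h_1(r)r$ and $h_2(r)r$ read off from Lemma 5.2 (in case (ii), $h_1(r)r=(1+\sqrt{1-4b^2})/2$ and $h_2(r)r=A'$; in case (i), $h_1(r)r=1-B/(2\epsilon)$ and $h_2(r)r=e^{A/(2\epsilon)}$). Lemma 5.1 then converts these into a lower bound for $\tfrac{m-1}{2}\lambda_{\min}+1-d_F\max\{2,\lambda_{\max}\}$; since $\partial\log f/\partial r\equiv 0$ and $h_2(r)r\ge 1$, this lower bound coincides exactly with the left-hand side of the numerical inequality imposed in each case of the corollary, so the hypothesis supplies a positive $\sigma$ for $(f_1)$.

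Next I would bound $\mathrm{vol}(\partial B(r))$ using the same Hessian upper bound. If $J(r,\theta)$ is the Jacobian of geodesic polar coordinates centered at the pole $x_0$, then $\partial_r\log J=\Delta_{g_0}r\le(m-1)h_2(r)$; integrating yields $J(r,\theta)\le Cr^{(m-1)h_2(r)r}$ for large $r$, so $\mathrm{vol}(\partial B(r))\le Cr^{(m-1)h_2(r)r}$. A direct evaluation of $\int_R^\infty dr/\mathrm{vol}(\partial B(r))$ then delivers $(f_2)$ with $\sigma=(m-1)h_2(r)r-1$, i.e.\ with $\sigma=m-2$ matched in case (i) and $\sigma=(m-1)A'-1$ matched in case (ii). The numerical hypotheses in the corollary are chosen precisely so that $(f_1)$ and $(f_2)$ are simultaneously satisfiable with this common admissible $\sigma>0$, whereupon Theorem 5.1 forces $u\equiv\mathrm{const}$. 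The main subtlety is this bookkeeping: one must align the volume-growth exponent $(m-1)h_2(r)r-1$ with the LHS of $(f_1)$, and the specific right-hand sides $m-2$ in (i) and $(m-1)A'-1$ in (ii) are exactly what this alignment produces; the positivity of $\sigma$ uses $m>\max\{2,2d_F\}$.
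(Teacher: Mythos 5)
Your overall architecture is the same as the paper's: with $f\equiv 1$, condition $(f_1)$ reduces via Lemmas 5.1 and 5.2 to the displayed numerical inequalities, and $(f_2)$ reduces to the volume-growth condition $(\int_R^\infty dr/\mathrm{vol}(\partial B(r)))^{-1}\le CR^\sigma$ of Corollary 5.1, which is then invoked. Your case (ii) is correct and matches the paper: there $h_2(r)r=A'$, your Jacobian integration gives $\mathrm{vol}(\partial B(r))\le Cr^{(m-1)A'}$, and $\sigma=(m-1)A'-1$ aligns with the hypothesis. But case (i) has a genuine gap. From the Hessian upper bound of Lemma 5.2(ii) you have $h_2(r)r=e^{A/(2\epsilon)}$, so your own estimate $\partial_r\log J=\Delta_{g_0}r\le (m-1)e^{A/(2\epsilon)}/r$ integrates to $\mathrm{vol}(\partial B(r))\le Cr^{(m-1)e^{A/(2\epsilon)}}$, which delivers $(f_2)$ only with $\sigma=(m-1)e^{A/(2\epsilon)}-1$. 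Your claim that this equals $m-2$ silently replaces $e^{A/(2\epsilon)}$ by $1$; whenever $A>0$ one has $(m-1)e^{A/(2\epsilon)}-1>m-2$ strictly, and since $(f_1)$ and $(f_2)$ must hold with the \emph{same} $\sigma$, your route would force the stronger hypothesis $1+(m-1)(1-\frac{B}{2\epsilon})-2d_Fe^{A/(2\epsilon)}\ge (m-1)e^{A/(2\epsilon)}-1$ rather than the corollary's $\ge m-2$. So as written you prove case (i) only when $A=0$.

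The paper sidesteps this by not routing the volume bound through the Hessian comparison at all: from $K_r\ge -A/(1+r^2)^{1+\epsilon}$ it deduces $Ric_{g_0}\ge -(m-1)A/(1+r^2)^{1+\epsilon}$, observes that $\int_0^\infty Ar(1+r^2)^{-(1+\epsilon)}dr=\frac{A}{2\epsilon}<\infty$, and applies the volume comparison theorem of [PRS] for Ricci bounds with integrable $rG(r)$, obtaining $\mathrm{vol}_{g_0}(\partial B(R))\le \omega_m e^{(m-1)A/(2\epsilon)}R^{m-1}$ --- polynomial of degree exactly $m-1$, with the curvature constant $A$ entering only the multiplicative constant, not the exponent. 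This yields $(f_2)$ with $\sigma=m-2$, exactly matching the corollary's hypothesis. To repair your proof, replace the Jacobian estimate in case (i) by this Ricci-based comparison (equivalently, compare against the solution of $h''=Gh$, $h(0)=0$, $h'(0)=1$, which grows linearly, not like $r^{e^{A/(2\epsilon)}}$, when $\int_0^\infty rG(r)dr<\infty$); case (ii) needs no change, since there the Jacobi solution genuinely grows like $r^{A'}$ and both routes give the exponent $(m-1)A'$.
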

\begin{proof}
For the first case (i), it follows that
\begin{equation*}
 Ric_{g_0}(x) \geq -\frac{(m-1)A}{(1+r^2(x))^{1+\epsilon}} \ \ \forall \ x\in M^m.
 \end{equation*}
 By direct calculation
\begin{equation*}
\int^{\infty}_0\frac{Ar}{(1+r^2)^{1+\epsilon}}dr=\frac{A}{2\epsilon},
\end{equation*}
Thus using the volume comparison theorem (cf. [PRS]), we have
\begin{equation*}
vol_{g_0}(\partial B(R)) \leq \omega_me^{\frac{(m-1)A}{2\epsilon}}R^{m-1}
\end{equation*}
where $\omega_m$ is the $(m-1)$-volume of the unit sphere in $R^m$, and thus
\begin{equation*}
(\int_R^{\infty}\frac{dr}{vol_{g_0}(\partial B(r))})^{-1} \leq (m-2)\omega_me^{\frac{(m-1)A}{2\epsilon}}R^{m-2} \ \ for \ \ R>R_0.
\end{equation*}

For the second case (ii), it follows that
 \begin{equation*}
 Ric_{g_0}(x) \geq -\frac{(m-1)a^2}{1+r^2(x)} \ \ \forall \ x\in M^m.
 \end{equation*}
 Then the volume comparison theorem yields (cf. [PRS])
  \begin{equation*}
vol_{g_0}(\partial B(R)) \leq CR^{(m-1)A^{'}}
\end{equation*}
where  $A^{'}=\frac{1+\sqrt{1+4a^2}}{2}$. Thus
\begin{equation*}
(\int_R^{\infty}\frac{dr}{vol_{g_0}(\partial B(r))})^{-1} \leq CR^{(m-1)A{'}-1} \ \ for \ \ R>R_0.
\end{equation*}
Therefore, using Corollary 5.1, the conclusion of this corollary is immediately proved.
\end{proof}

\begin{remark}
If $-\alpha^2\leq K_r \leq -\beta^2$ with  $\alpha> 0$, $\beta> 0$, then the volume of $\partial B(R)$ has exponential growth, thus the condition (5.2) doesn't hold for any $\sigma >0$, so we needn't consider the case (i) in Theorem 5.3.
\end{remark}

\begin{corollary}
 Let $u: (R^m, g_0)\rightarrow (N^n, h)$ be a $C^2$ $F$-harmonic map. Suppose the $F$-lower
degree $l_F>0$ and $F'(\frac{|du|^2}{2})< +\infty$. If $d_F\leq 1$ and $u(x)\rightarrow p_0 \in N^n$ as $|x|\rightarrow \infty$, then $u$ is a constant map.
\end{corollary}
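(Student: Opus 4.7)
The plan is to derive Corollary 5.2 as a direct specialization of Corollary 5.1 to the Euclidean setting. On $(R^m, g_0)$ the radial curvature vanishes identically, so hypothesis (i) of Corollary 5.1 holds trivially with $A=B=0$ and any $\epsilon>0$. Under this choice the numerical condition $1+(m-1)(1-\tfrac{B}{2\epsilon})-2d_F e^{A/(2\epsilon)} \geq m-2$ collapses to $m-2d_F \geq m-2$, i.e.\ $d_F \leq 1$, which is precisely the standing assumption. The remaining hypotheses ($l_F>0$, $F'(|du|^2/2)<\infty$, and $u(x)\to p_0$ as $|x|\to\infty$) are given directly, so Corollary 5.1 applies and forces $u$ to be constant.

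For transparency one can also verify conditions (5.1) and (5.2) of Corollary 5.1 by direct computation in flat space. Since $\mathrm{Hess}(r)=\tfrac{1}{r}(g_0-dr\otimes dr)$, Lemma 5.1 with $h_1(r)=h_2(r)=1/r$ yields $\tfrac{m-1}{2}\lambda_{\min}+1-d_F\max\{2,\lambda_{\max}\} \geq (m-1)+1-2d_F = m-2d_F$, which is bounded below by $\sigma=m-2>0$ exactly when $d_F\leq 1$. Meanwhile $\mathrm{vol}(\partial B(r))=\omega_m r^{m-1}$ gives $\bigl(\int_R^\infty dr/\mathrm{vol}(\partial B(r))\bigr)^{-1}=\omega_m(m-2)R^{m-2}$, so (5.2) holds with the same exponent $\sigma=m-2$. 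Corollary 5.1 then closes the argument.

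There is no serious obstacle here: the content of the corollary is that the Euclidean case sits at the borderline $\sigma = m-2$ of the general pinched-manifold result, and the compatibility requirement that the lower-energy exponent coming from (5.1) match the upper-energy exponent permitted by (5.2) is exactly $d_F \leq 1$. The only point to verify carefully is that the standing assumption $m>\max\{2,2d_F\}$ from \S 2 (needed for the volume integral to converge and for $\sigma=m-2>0$) is consistent with $d_F\leq 1$, which is immediate.
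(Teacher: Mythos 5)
Your proposal is correct and takes essentially the same route as the paper: the paper's one-line proof sets $A=B=0$ in its pinched-curvature corollary (Corollary 5.2(i) in the paper's numbering), so the condition $1+(m-1)-2d_F\geq m-2$ reduces exactly to $d_F\leq 1$ with $\sigma=m-2$. Your supplementary direct check of conditions (5.1) and (5.2) in flat space, via $\lambda_{\min}=\lambda_{\max}=2$ and $\mathrm{vol}(\partial B(r))=\omega_m r^{m-1}$, matches the paper's computations and merely makes the specialization explicit.
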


\begin{proof}
 Consider the case $A=B=0$ in Corollary 5.2 (i). If $d_F\leq 1$, then the conditions in that corollary are satisfied by choosing $\sigma=m-2$, and the conclusion of this corollary follows immediately.
\end{proof}

\begin{remark}
For the harmonic map $u$, it is an $F$-harmonic map with $F=2t$, $d_F=l_F=1$,  $F^{'}(t)=2$.  Thus  Theorem A of [Ji]  stated in the introduction can be regarded as one version of this corollary in the case of  harmonic maps.
\end{remark}

\begin{theorem}
Suppose  $f$ satisfies $(f_1)$ and $(f_3)$ and the $F$-lower
degree $l_F>0$. Then for any $p \in N^n$, there is an (nonempty) open neighbourhood $U_p \subset N^n$, such that the family of open sets
 $\{U_p \mid p \in N^n\}$ has following property:

 If $u: (M^m, f^2g_0) \rightarrow (N^n, h)$ is a $C^2$ harmonic map,  $F'(\frac{|du|^2}{2})< +\infty$, and
 for some $p \in N^n$, $u(x) \in U_p$ as $r(x) \rightarrow \infty$, then $u$ is  a constant map.
 \end{theorem}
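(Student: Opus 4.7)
The plan is to mimic the strategy of Theorem 5.1 (combining Proposition 3.1 with the upper-energy argument of Proposition 4.1), but to replace the asymptotic hypothesis $u(x)\to p$ by the weaker hypothesis $u(x)\in U_{p}$, using $(f_{3})$ (in place of $(f_{2})$ and the factor $\eta(R)\to 0$) to still drive through a contradiction. The first step is to construct $U_{p}$. Choose a normal coordinate chart $(\varphi,U)$ at $p$ with $\varphi(p)=0$ and $h_{\alpha\beta}(0)=\delta_{\alpha\beta}$. At $y=0$ the matrix inequality
\[
\Bigl(\tfrac{\partial h_{\alpha\beta}(y)}{\partial y^{\gamma}}y^{\gamma}+2h_{\alpha\beta}(y)\Bigr)\;\geq\;\bigl(h_{\alpha\beta}(y)\bigr)
\]
reduces to $2I\geq I$, which is strict, so by continuity it persists on some open neighbourhood; intersect this with a small relatively compact coordinate ball around $p$ and call the resulting open set $U_{p}$.

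Now suppose $u(x)\in U_{p}$ once $r(x)>R_{1}$. Running the variation $u+tw$ with $w=\phi(r)u$ exactly as in Proposition~4.1 gives identity (4.4), and the choice of $U_{p}$ legitimates the estimate used to pass from (4.4) to the upper bound on $Z(R)$; Cauchy--Schwarz on the resulting boundary integral then delivers the differential inequality
\[
Z(R)^{2}\;\leq\;C\,Z'(R)\,M(R),\qquad M(R)=\int_{\partial B(R)}F'\!\bigl(\tfrac{|du|^{2}}{2}\bigr)h_{\alpha\beta}(u)u^{\alpha}u^{\beta}f^{m-2}\,ds_{g_{0}},
\]
for all $R$ beyond some $R_{3}$. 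The crucial difference from Proposition~4.1 is that without $u(x)\to p$ one no longer has $\eta(R)\to 0$; instead $u$ merely stays in the bounded set $\overline{U_{p}}$, so $h_{\alpha\beta}(u)u^{\alpha}u^{\beta}$ is uniformly bounded, and together with $F'(|du|^{2}/2)<+\infty$ this yields
\[
M(R)\;\leq\;C\int_{\partial B(R)}f^{m-2}(x)\,ds_{g_{0}}.
\]

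Rewriting $Z^{2}\leq C Z'M$ as $-\tfrac{d}{dR}(1/Z)\geq C/M$ and integrating, one obtains $1/Z(R)\geq C\int_{R}^{\infty}dr/M(r)+1/Z(R_{4})$ whenever $Z>0$ on $[R,R_{4}]$. Condition $(f_{3})$ gives $M(r)\leq Cr\log r$, so the improper integral $\int_{R}^{\infty}dr/M(r)\geq (1/C)\int_{R}^{\infty}dr/(r\log r)=+\infty$. On the other hand, if $u$ were non-constant then Proposition~3.1 (using $(f_{1})$) would give $E_{F}^{R}(u)\geq c(u)R^{\sigma}\to\infty$, and inequality (4.5) with $l_{F}>0$ together with the boundedness of $D(R_{1})$ would force $Z(R)\to +\infty$; in particular $Z$ would be positive and finite for $R$ large, letting us send $R_{4}\to\infty$ and conclude $1/Z(R)=+\infty$, a contradiction. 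Hence $u$ must be constant.

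The main obstacle is the geometric step of picking $U_{p}$ so that the matrix inequality (4.1) holds \emph{and} $h_{\alpha\beta}(u)u^{\alpha}u^{\beta}$ stays uniformly small enough for the rest of the argument; once this is done the analytic core is essentially the same as Proposition~4.1 with $(f_{3})$ replacing the $\eta(R)\to 0$ mechanism, and the Liouville conclusion drops out by the same contradiction between lower and upper energy growth.
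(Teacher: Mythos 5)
Your proposal is correct and follows essentially the same route as the paper's proof: choose $U_p$ so that the matrix inequality $(4.1)$ holds and $h_{\alpha\beta}(y)y^\alpha y^\beta$ is bounded there (by a constant $C_p$), rerun the variational argument of Proposition 4.1 to obtain $\frac{1}{Z(R)}\geq C\int_R^\infty \frac{dr}{M(r)}$, bound $M(r)\leq C C_p\, r\log r$ via $(f_3)$ so that this integral diverges, and contradict $Z(R)>0$ for large $R$, which follows from Proposition 3.1 and $l_F>0$ exactly as you argue. Your explicit normal-coordinate/continuity justification for the existence of $U_p$ is a detail the paper leaves implicit (``it is clear that we can choose $U$ \ldots''), but the argument is identical in substance.
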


 \begin{proof}
 The proof of Theorem 5.4 is a modification of the proof of Theorem 5.1. Choose a family of  coordinate neighbourhoods
 $\{U_p \mid p \in N^n\}$ as follows:   let $(U_p, \varphi)$ be a coordinate system centered at  $p$ such that
 \begin{equation*}
 (\frac{\partial h_{\alpha\beta(y)}}{\partial y^\gamma}y^\gamma +2h_{\alpha\beta}(y)) \geq (h_{\alpha\beta}(y)) \qquad on \ U_p
 \end{equation*}
 and $$h_{\alpha\beta}(y)y^\alpha y^\beta \leq C_p$$
 where $C_p$ is an arbitrary constant which may depend on $p$.
 Then we claim that this family $\{U_p \mid p \in N^n\}$ is what we want.

 In fact, if $u: (M^m, f^2g_0) \rightarrow (N^n, h)$ is a non-constant $C^2$ harmonic map, and for some $p \in N^n$, $u(x) \in U_p$
 as $r(x)\rightarrow \infty$, then we may assume that for some $R_0$, $u(x) \in U_p$ for $r(x) > R_0$.  Proceeding as in the proof of Theorem 5.1, we get
 \begin{equation}
 \frac{1}{Z(R)}\geq C \int_R^{\infty}\frac{1}{M(r)}dr,  \ \ for \ R > R_3.
 \end{equation}
 But in this case
 \begin{equation*}
 M(R) \leq C C_p\underset{\partial B(R)}{\int}f^{m-2}(x)  ds_{g_0} \leq CC_p R\log R.
 \end{equation*}
 Therefore
 \begin{equation*}
\int_R^{\infty}\frac{1}{M(r)}dr \geq \frac{1}{CC_p}\int_R^{\infty}\frac{1}{r\log R  }dr=\infty.
\end{equation*}
 Now we have a contradiction to (5.3), since if u is not a constant map, $Z(R)>0$ for $R$ large.
\end{proof}

It is interesting to note that different $F(t)$ may have the same upper degree or the lower degree. Therefore the results in this section may be applied simultaneously  to different $F$-harmonic maps.

\section{A further theorem and its application}

In this section, we show that the asymptotic condition on $F$-harmonic maps for Liouville theorems can be relaxed if the target manifold is more special. Let $(R^n, h_0)$ be  the standard Euclidean space, where $h_0=\underset{\alpha=1}{\overset{n}{\sum}}(dy_{\alpha})^2$.
 Denote by $\rho(y)=dist_{h_0}(y, o)$ the standard Euclidean distance relative to the origin. If we choose a function $\lambda(\rho)= k_1\rho^{k-1}$, where $k_1>0$ and $k\geq 1$, then it is easy to verify that the metric $h(y)=\lambda^2(\rho(y))h_0(y)$ satisfies
 \begin{equation*}
 (\frac{\partial h_{\alpha\beta(y)}}{\partial y^\gamma}y^\gamma +2h_{\alpha\beta}(y)) \geq  (h_{\alpha\beta}(y)).
 \end{equation*}
Let $u: (M^m, g)\rightarrow (R^n, h)$ be a $F$-harmonic map.
Notice that the whole image of $u$ is contained in a global coordinate of $R^n$.
So one may construct the variation $u+ tw$ as in $\S4$. Therefore all integral formulae and inequalities in $\S4$ still hold just by taking $R_1=0$.

Set
\begin{equation*}
K(R)=\underset{r(x)=R}{\max}\{F'(\frac{|du|^2}{2})h_{\alpha\beta}(u)u^\alpha u^\beta\}.
\end{equation*}
By using a small modified method of Theorem 5.1, we can get the following theorem.
\begin{theorem}
Let $u$: $(M^m, f^2g_0)\rightarrow (R^n, h)$ be a  $C^2$ $F$-harmonic map. Suppose that $f$ satisfies $(f_1)$ and $(f_2)$ and  the $F$-lower degree
$l_F>0$. If  $\lim_{R\rightarrow \infty}K(R)=0$, then $u$ is a constant map.
 \end{theorem}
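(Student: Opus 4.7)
My plan is to follow the skeleton of Theorem 5.1 (that is, Proposition 4.1 combined with Proposition 3.1), exploit the fact that the whole image of $u$ lies in one global coordinate chart of $R^n$, and then replace the role played by the asymptotic hypothesis $u(x)\to p_0$ by the hypothesis $K(R)\to 0$.

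First I would argue by contradiction: assume $u$ is non-constant. Since $h = \lambda^2(\rho)h_0$ with $\lambda(\rho)=k_1\rho^{k-1}$ satisfies the matrix inequality $(\partial_\gamma h_{\alpha\beta}\,y^\gamma+2h_{\alpha\beta})\geq(h_{\alpha\beta})$ globally on $R^n$, I may take $R_1=0$ and use the variation $u+tw$ with $w(x)=\phi(r(x))u(x)$ directly on all of $M^m$. Repeating the computation that produced (4.4) and the Cauchy--Schwarz step leading to (4.5), but now with no $D(R_1)$ boundary term coming from an inner cut-off, I obtain the differential inequality
\begin{equation*}
Z(R)^2\;\leq\; C\,Z'(R)\,M(R),\qquad Z(R):=\int_{B(R)}g_0^{ij}F'\!\left(\tfrac{|du|^2}{2}\right)h_{\alpha\beta}(u)\tfrac{\partial u^\alpha}{\partial x_i}\tfrac{\partial u^\beta}{\partial x_j}f^{m-2}\,dv_{g_0},
\end{equation*}
where $M(R)$ is the boundary integral in (4.6). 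Integrating exactly as in Proposition 4.1 gives
\begin{equation*}
Z(R)\;\leq\;\frac{C}{\int_R^\infty M(r)^{-1}\,dr}\qquad\text{for all large }R.
\end{equation*}

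Next I would use the new asymptotic hypothesis to bound $M$. By definition of $K(R)$,
\begin{equation*}
M(R)\;\leq\;K(R)\int_{\partial B(R)}f^{m-2}(x)\,ds_{g_0}.
\end{equation*}
Replace $K$ by its nonincreasing majorant $\widetilde K(R):=\sup_{s\geq R}K(s)$, which still tends to $0$. Then
\begin{equation*}
\int_R^\infty\frac{dr}{M(r)}\;\geq\;\frac{1}{\widetilde K(R)}\int_R^\infty\frac{dr}{\int_{\partial B(r)}f^{m-2}\,ds_{g_0}}\;\geq\;\frac{C^{-1}R^{-\sigma}}{\widetilde K(R)}
\end{equation*}
by $(f_2)$. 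Combining the last two displays yields $Z(R)\leq C\,\widetilde K(R)\,R^\sigma$.

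On the other hand, the definition of $l_F$ gives $Z(R)\geq 2l_F\,E_F^R(u)$, and Proposition 3.1, applicable under $(f_1)$, provides the lower bound $E_F^R(u)\geq c(u)R^\sigma$ for large $R$. Hence $2l_F c(u)\,R^\sigma\leq C\widetilde K(R)R^\sigma$, i.e.\ $2l_F c(u)\leq C\widetilde K(R)$, which contradicts $\widetilde K(R)\to 0$ since $l_F>0$ and $c(u)>0$. Therefore $u$ must be constant. The main technical obstacle is the careful verification that no boundary term arises at $r=0$ when one takes $R_1=0$ in the derivation of (4.4), and that $Z(R)>0$ eventually (which follows from $E_F^R(u)\to\infty$); both are handled because the whole image of $u$ is covered by a single global chart on $R^n$, so the cut-off near $R_1$ used in Proposition 4.1 becomes unnecessary.
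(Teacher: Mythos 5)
Your proposal is correct and follows essentially the same route as the paper: the paper's proof likewise uses the global chart on $R^n$ to take $R_1=0$ in the \S 4 machinery, bounds $M(R)\leq K(R)\int_{\partial B(R)}f^{m-2}\,ds_{g_0}$, replaces $K$ by a continuous nonincreasing majorant $\overline{K}(R)\to 0$ (your $\widetilde K(R)=\sup_{s\geq R}K(s)$), and then repeats the Theorem 5.1 argument, contradicting the lower growth rate of Proposition 3.1. The only difference is that you write out explicitly the final step $2l_F\,c(u)R^\sigma\leq Z(R)\leq C\widetilde K(R)R^\sigma$, which the paper leaves to the reader with ``the remaining part of the proof is similar to that of Theorem 5.1.''
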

 \begin{proof}
 By (4.7), we have
 \begin{equation*}
 M(R)\leq K(R)\underset{\partial B(R)}{\int} f^{m-2}(x)ds_{g_0}.
\end{equation*}
Since  $K(R)$ is  continuous and $\lim_{R\rightarrow \infty}K(R)=0$, we can choose a function $\overline{K}(R)$ such that\\
\hspace*{0.4cm} (i) $\overline{K}(R)\geq K(R);$\\
\hspace*{0.4cm}(ii) $\overline{K}(R)$ is nonincreasing on $(R_3, \infty)$ and $\overline{K}(R)\rightarrow 0$ as $R\rightarrow \infty$, where $R_3$ is the constant in $\S4$.\\
Thus the remaining part of the proof is similar to that of Theorem 5.1.  We omit the details.
\end{proof}

It follows immediately that
\begin{corollary}
Let $u$: $(M^m, f^2g_0)\rightarrow R$ be a  $C^2$ $F$-harmonic function. Suppose that $f$ satisfies $(f_1)$ and $(f_2)$, and  the $F$-lower degree
$l_F>0$. If
\begin{equation*}
\underset{R\rightarrow \infty}{\lim} \underset{r(x)=R}{\max}\{F'(\frac{|du|^2}{2})|u|^2 \}=0,
\end{equation*}
then $u$ is  a constant map.
 \end{corollary}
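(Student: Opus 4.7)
The plan is to deduce the corollary directly from Theorem 6.1 by specializing to the one-dimensional target. First, I would observe that the real line $\mathbb{R}$ with its standard metric fits into the framework of Section 6: take $n=1$, and in the family $\lambda(\rho) = k_1 \rho^{k-1}$ choose $k=1$, $k_1 = 1$, so that $\lambda \equiv 1$ and the warped metric $h = \lambda^2 h_0$ reduces to $dy^2$. This is the simplest admissible choice allowed by the section's setup.

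Next, I would verify that this target metric satisfies the matrix inequality imposed on the metrics used in Section 6,
$$\Bigl(\frac{\partial h_{\alpha\beta}(y)}{\partial y^\gamma} y^\gamma + 2 h_{\alpha\beta}(y)\Bigr) \geq \bigl(h_{\alpha\beta}(y)\bigr).$$
Since $h_{\alpha\beta} \equiv \delta_{\alpha\beta}$ is constant in $y$, the first term vanishes, and the inequality becomes $2 \geq 1$, which is trivially true. Consequently the structural hypothesis on the target needed by Theorem 6.1 holds, while the remaining hypotheses $(f_1)$, $(f_2)$ and $l_F > 0$ are inherited verbatim from the assumption of the corollary.

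Finally, I would translate the boundary quantity $K(R)$ from Theorem 6.1 into the language of the corollary. With $n = 1$ and $h_{\alpha\beta} = 1$, the expression $h_{\alpha\beta}(u) u^\alpha u^\beta$ becomes $u^2 = |u|^2$, so
$$K(R) = \max_{r(x)=R}\Bigl\{F'\Bigl(\tfrac{|du|^2}{2}\Bigr)\, h_{\alpha\beta}(u) u^\alpha u^\beta\Bigr\} = \max_{r(x)=R}\Bigl\{F'\Bigl(\tfrac{|du|^2}{2}\Bigr)\, |u|^2\Bigr\}.$$
The hypothesis $\lim_{R \to \infty} K(R) = 0$ of Theorem 6.1 then coincides exactly with the hypothesis assumed in the corollary, so Theorem 6.1 yields that $u$ is constant. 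The only substantive check is the target metric inequality, and as noted above it is immediate; there is no genuine obstacle, since the corollary is essentially a one-dimensional restatement of Theorem 6.1.
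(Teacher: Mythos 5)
Your proposal is correct and matches the paper's own treatment: the paper derives this corollary directly from Theorem 6.1 (stating only ``it follows immediately''), exactly the specialization to $n=1$ with the flat metric $h=dy^2$ that you carry out. Your explicit verification of the matrix inequality and the identification $K(R)=\max_{r(x)=R}\{F'(\frac{|du|^2}{2})|u|^2\}$ simply spells out what the paper leaves implicit.
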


\begin{corollary}
Let $x_{m+1}= u(x_1, \cdots, x_m)$ be an entire minimal graph on $R^m$ $(m>2)$, where $x=(x_1, \cdots, x_m) \in R^m$ is the standard Euclidean coordinate. If
\begin{equation*}
\underset{R\rightarrow \infty}{\lim} \underset{|x|=R}{\max}\{\frac{(u-c)^2}{\sqrt{1+|du|^2}}\}=0,
\end{equation*}
then the graph is a horizontal  hyperplane.
\end{corollary}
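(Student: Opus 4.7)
The plan is to reduce the corollary to a direct application of Corollary 6.1 via the translation $v := u - c$. Since vertical translations preserve minimal graphs (equivalently, $v$ satisfies the same minimal surface equation as $u$, because $\nabla v = \nabla u$), the graph $(x, v(x))$ is still minimal, so by Example 2.2 the map $v \colon (\mathbb{R}^m, g_0) \to \mathbb{R}$ is $F$-harmonic with $F(t) = \sqrt{1+2t} - 1$; for this $F$ one has $d_F = 1$ and $l_F = 1/2$.

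Next I would verify the hypotheses of Corollary 6.1 with $f \equiv 1$. Since $\partial \log f/\partial r = 0$ and on $\mathbb{R}^m$ one has $\mathrm{Hess}(r^2) = 2g_0$, the eigenvalues of $\mathrm{Hess}(r^2) - 2\, dr \otimes dr$ on the hyperplane orthogonal to $\partial_r$ are all equal to $2$, so $\lambda_{\min} = \lambda_{\max} = 2$. The left-hand side of $(f_1)$ then reduces to $\tfrac{m-1}{2}\cdot 2 + 1 - d_F\max\{2,2\} = m-2$, which is strictly positive since $m > 2$, so one may take $\sigma = m-2$. For $(f_2)$, the standard computation $\mathrm{vol}(\partial B(r)) = \omega_m r^{m-1}$ yields
\[
\Bigl(\int_R^{\infty}\frac{dr}{\omega_m r^{m-1}}\Bigr)^{-1} = \omega_m(m-2)\,R^{m-2},
\]
again with exponent $\sigma = m-2$. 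The remaining hypothesis $l_F > 0$ holds since $l_F = 1/2$.

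The asymptotic condition is where the translation plays its role. Using $|dv| = |du|$ and $F'(t) = 1/\sqrt{1+2t}$, one computes
\[
F'\!\Bigl(\tfrac{|dv|^2}{2}\Bigr)\,|v|^2 \;=\; \frac{(u-c)^2}{\sqrt{1+|du|^2}},
\]
so the assumption $\lim_{R\to\infty}\max_{|x|=R}\bigl\{(u-c)^2/\sqrt{1+|du|^2}\bigr\} = 0$ is precisely the hypothesis $\lim_{R\to\infty}\max_{r(x)=R}\{F'(|dv|^2/2)|v|^2\} = 0$ of Corollary 6.1 applied to $v$. Corollary 6.1 then forces $v$ to be constant, i.e.\ $u \equiv c$, so the graph is the horizontal hyperplane $x_{m+1} = c$.

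There is no substantive obstacle: the proof is essentially bookkeeping once one observes that (i) the translation $v = u - c$ preserves the $F$-harmonic structure with the same $F$, and (ii) the quantity $F'(|dv|^2/2)\,|v|^2$ for $F(t) = \sqrt{1+2t}-1$ matches exactly the expression appearing in the hypothesis. The only point requiring some care is the verification of the constants $\lambda_{\min} = \lambda_{\max} = 2$ on $\mathbb{R}^m$ and the choice $\sigma = m-2$, which work precisely because $m > 2$.
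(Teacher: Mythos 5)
Your proposal is correct and follows essentially the same route as the paper's own proof: both reduce to Corollary 6.1 via the vertical translation $v = u - c$ (justified by translation invariance of minimality), compute $d_F = 1$, $l_F = \tfrac{1}{2}$, $\lambda_{\min} = \lambda_{\max} = 2$, verify $(f_1)$ and $(f_2)$ with $f \equiv 1$ and $\sigma = m-2$, and observe that $F'\bigl(\tfrac{|dv|^2}{2}\bigr)|v|^2 = \tfrac{(u-c)^2}{\sqrt{1+|du|^2}}$ matches the asymptotic hypothesis. The only discrepancy is an inessential normalization of $\omega_m$ (unit ball versus unit sphere) in the $(f_2)$ constant, which does not affect the exponent $\sigma = m-2$.
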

\begin{proof}
From Example 2.2, we know that  $u: (R^m, g_0)\rightarrow R$ is a $F$-harmonic map with $F(t)=\sqrt{1+2t}-1$.

By a direct calculation, we have $l_F=\frac{1}{2}$, $d_F=1$, $F^{'}(\frac{|du|^2}{2})= \frac{1}{\sqrt{1+|du|^2}} < +\infty$, and $\lambda_{min}=\lambda_{max}=2$. Thus
\begin{equation*}
\frac{(m-1)}{2}\lambda_{\min} + 1
- d_F\max\{2,\lambda_{\max}\}=m-2
\end{equation*}
and
\begin{equation*}
(\int_R^{\infty}\frac{dr}{vol(\partial B(r))})^{-1}=m(m-2)\omega_mR^{m-2}
\end{equation*}
where $\omega_m$ is the volume of the unit sphere in $R^m$. Clearly the minimality of the graph is invariant under the upward or downward movement in $x_{m+1}$-axis direction. Thus the conditions of Corollary 6.1 are satisfied if we
choose $C=m(m-2)\omega_m$ and $\sigma=m-2$,  therefore $u$ is  constant  and the graph is a horizontal  hyperplane.
\end{proof}

\begin{remark}
 (i) When $m \leq 7$, it is well known that the entire graph $x_{m+1}= u(x_1, \cdots, x_m)$ over $R^m$ is a hyperplane in $R^{m+1}$(cf. [Si]).
We know that  $\frac{1}{\sqrt{1+|du|^2}}=\cos(\theta)$, where $\theta$ is the angle function between the normal vector field of the graph and the $x_{m+1}$-axis. Actually we may state  Corollary
6.2 in a more geometric way: Let $P$ be an $m$-dimensional hyperplane in $R^{m+1}$. Suppose
$M^m$ is a complete minimal hypersurface in $R^{m+1}$ which is a graph over the plane
$P$. Denote by $\theta$ the angle function between the normal vector field of $M$ and the
normal vector of $P$. For $q\in M$, let $d(q, P)$ be the Euclidean distance from $q$ to
$P$. If
\begin{equation*}
\underset{q\rightarrow \infty}{\lim}d^2(q, P)\cos\theta(q)=0,
\end{equation*}
then $M$ is a hyperplane parallel to $P$.

(ii) From Corollary 6.2, we may deduce the following result: If there exists a constant $c$ such that
  $\underset{|x|\rightarrow \infty}{\lim} u(x)=c$,  then $u=c$.
  Note that the later result is also a consequence of Simon's result (cf. Lemma 1.1 in [Sim]).
\end{remark}

\vspace{0.5cm}
$\mathbf{Acknowledgements}$. The authors would like to thank Prof. S.S. Wei and Prof. T.H. Otway for their valuable comments  and suggestions. They would also like to thank Dr. Qingchun Ji  and Dr. Y.B. Ren for their helpful discussions.

%\end{CJK*}
\end{document}